     \newcommand{\NN}{\mathbb{N}}
     \newcommand{\RR}{\mathbb{R}}
     \newcommand{\cG}{\mathcal{G}}
	\newcommand{\sine}{\sin(\ell_\me \omega)}
	\newcommand{\sinf}{\sin(\ell_\mf \omega)}
	\newcommand{\cose}{\cos(\ell_\me \omega)}
	\newcommand{\cosf}{\cos(\ell_\mf \omega)}
\newtheorem{theorem}{Theorem}
\newtheorem{lemma}[theorem]{Lemma}
\newtheorem{proposition}[theorem]{Proposition}
\newtheorem{remark}[theorem]{Remark}
\newcommand{\be}{\begin{equation}}
\newcommand{\ee}{\end{equation}}
\newcommand{\bea}{\begin{eqnarray*}}
\newcommand{\eea}{\end{eqnarray*}}
\newcommand{\beq}{\begin{eqnarray}}
\newcommand{\eeq}{\end{eqnarray}}
\newcommand{\bigzero}{\mbox{\normalfont\Large\bfseries 0}}
\newcommand{\rvline}{\hspace*{-\arraycolsep}\vline\hspace*{-\arraycolsep}}
\newcommand{\mG}{\mathsf G}
\newcommand{\mV}{\mathsf V}
\newcommand{\mw}{\mathsf w}
\newcommand{\mv}{\mathsf v}
\newcommand{\mE}{\mathsf E}
\newcommand{\me}{\mathsf e}
\newcommand{\mf}{\mathsf f}
\newcites{group}{Quoted publications by members of the group}
\newcites{other}{Quoted publications by other authors}
\title{On fully supported eigenfunctions of quantum graphs} 
\subjclass[2010]{34B45, 34L10, 81Q35}
\keywords{Quantum graphs, nodal domains.}
\author[M.~Pl\"umer]{Marvin Pl\"umer}
\author[M.~T\"aufer]{Matthias T\"aufer}
\address{Marvin Pl\"umer, Lehrgebiet Analysis, Fakult\"at Mathematik und Informatik, Fern\-Universit\"at in Hagen, D-58084 Hagen, Germany}
\email{marvin.pluemer@fernuni-hagen.de}
\address{Matthias T\"aufer, Lehrgebiet Analysis, Fakult\"at Mathematik und Informatik, Fern\-Universit\"at in Hagen, D-58084 Hagen, Germany}
\email{matthias.taeufer@fernuni-hagen.de}
\date{\today}
\thanks{The work of M.P.\ was supported by the Deutsche Forschungsgemeinschaft (Grant 397230547). The authors thank Lior Alons, Gregory Berkolaiko, Joachim Kerner and Delio Mugnolo for helpful comments and discussions. 
}
\begin{document}
\begin{abstract}
	We prove that every metric graph which is a tree has an orthonormal sequence of Laplace-eigenfunctions of full support. 
	This implies that the number of nodal domains $\nu_n$ of the $n$-th eigenfunction of the Laplacian with standard conditions satisfies $\nu_n/n \to 1$ along a subsequence and has previously only been known in special cases such as mutually rationally dependent or rationally independent side lengths.
	It shows in particular that the Pleijel nodal domain asymptotics from two- or higher dimensional domains cannot occur on these graphs: Despite their more complicated topology, they still behave as in the one-dimensional case.
	We prove an analogous result on general metric graphs under the condition that they have at least one Dirichlet vertex.
	Furthermore, we generalize our results to Delta vertex conditions and to edgewise constant potentials.
	The main technical contribution is a new expression for a secular function in which modifications to the graph, to vertex conditions, and to the potential are particularly easy to understand.
\end{abstract}

\maketitle
\section{Introduction}

\emph{Metric graphs} are metric spaces constituted of intervals which are glued together at their endpoints according to the structure of a combinatorial graph.
They can be understood as one possible generalization of intervals, and one can see them as objects between one- and higher-dimensional domains. A \emph{quantum graph} is a self-adjoint operator defined upon a metric graph. One such operator is the Laplace operator (or \emph{Laplacian}), which operates edgewise as the second derivative on those square summable functions that satisfy prescribed vertex conditions.
In this note, we focus on standard, Dirichlet and $\delta$-coupling conditions.
The Laplacian can have some surprising features such as eigenfunctions vanishing on some edges -- a violation of the unique continuation principle and a metric-graph specific phenomenon which never happens in connected domains in any dimension.

One motivation for this article arises from the question whether the $\limsup \nu_n/n$, where $\nu_n$ denotes the number of so-called nodal domains of the $n$-th eigenfunction, always behaves as in the one-dimensional case, that is $\limsup \nu_n/n = 1$. 
In other words, we ask whether the topological complexity of metric graphs alone is indeed unable to warp the nodal domain asymptotics in such a way that they differ from the one-dimensional case and exhibit higher dimensional features.
This has been proved only under technical assumptions so far.
Our first result, Theorem~\ref{thm:main_tree} proves that this is indeed universally true on tree graphs with standard conditions at all vertices, thus partly settling a conjecture asked in~\cite{HofmannKMP-20}.
Theorem~\ref{thm:main_Dirichlet} generalizes this to all connected graphs with at least one Dirichlet vertex.
Theorems~\ref{thm:main_delta} and~\ref{thm:main_potential} show that these results are robust when standard conditions are replaced by $\delta$-coupling conditions and when an edgewise constant potential is added to the Laplacian.

	Our technical novelty is a new expression for a secular function, that is an analytic function the positive zeros of which are in one-to-one correspondence with the spectrum of the Laplacian.
	Our secular function differs from usual expressions which are constructed as $\det (\operatorname{Id} - U(\omega))$ for a unitary matrix $U$.
	The latter function boasts the advantage that one can incorporate all possible self-adjoint boundary conditions in the theory and that the order of its (positive) zeros coincides with the multiplicity of the eigenvalue. 
	Our secular function has the advantage that it is a priori real-valued and that we can explictly understand the effects of some graph operations on it.
	Some topological properties of the graph are also directly observable from the secular function, see Proposition~\ref{prop:ker_A_0}.
	
	
We introduce notation and main results in Section~\ref{sec:intro_and_results}.
The secular function is introduced in Section~\ref{sec:secular_function} where also Theorem~\ref{thm:main_tree} is proved.
Sections~\ref{sec:Dirichlet}, \ref{sec:delta}, and \ref{sec:potential} contain the proofs of Theorems~\ref{thm:main_Dirichlet}, \ref{thm:main_delta}, and \ref{thm:main_potential}.

\section{Preliminaries and main results}
	\label{sec:intro_and_results}

 Let \(\mathcal G\) be a \emph{compact and connected metric graph}, that is a finite, connected combinatorial graph \(\mG\) with edge set \(\mE=\mE_\mathcal G\) and vertex set \(\mV=\mV_\mathcal G\) where each edge \(\me\) is identified with an interval \([0,\ell_\me]\) of finite length \(\ell_\me\in (0,\infty)\). 
 For a rigorous definition of metric graphs as metric measure spaces see also \cite{Mugnolo-19}. 
 We write \(\mathcal G=(\mG,\ell)\) where \(\ell = (\ell_\me)_{\me\in\mE}\in \mathbb R^{|\mE|}\) denotes the vector of edge lengths. 
 The \emph{initial} and \emph{terminal vertex} of \(\me\) is the vertex \(\mv\in\mV\) incident to \(\me\) which is identified with \(0\) or \(\ell_\me\), respectively. 
 This suggests the notation \(\me=\mv\mw\) for an edge $\me$ connecting the initial vertex $\mv$ with the terminal vertex $\mw$ which we will use when appropriate. 
The \emph{degree} of a vertex $\mv$ is the number of edges $\mv$ belongs to. 
 
 We are interested in spectral properties of the Laplacian \(-\Delta=-\Delta_\mathcal G\) on \(\mathcal G\) that acts edgewise as the negative second derivative \(-\frac{\mathrm{d}^2}{\mathrm{d} x_\me^2}\) and whose domain is the space of functions \(f\) that are edgewise in the Sobolev space \(H^2(0,\ell_\me)\) and satisfy for each vertex \(\mv\in\mV\) one of the following vertex conditions:
\begin{itemize}
	\item \emph{Standard} or \emph{Kirchhoff-Neumann conditions}: \(f\) is continuous in \(\mv\) and satisfies the Kirchhoff condition
	\begin{equation}\label{eq:kirchhoff-condition}
		\sum_{\me\in\mE_\mv} \partial_{\me}f(\mv)=0
	\end{equation}
	where \(\mE_\mv\) denotes the set of edges incident to \(\mv\) and \(\partial_{\me}f(\mv)\) is the outward derivative of \(f\) on \(\me\) at \(\mv\);
	\item \emph{Dirichlet conditions}: \(f\) takes the value \(0\) in \(\mv\).
\end{itemize}
We denote by \(\mV_N\) the set of vertices in \(\mV\) with standard conditions and by \(\mV_D=\mV\setminus\mV_N\) the set of vertices with Dirichlet vertex conditions. It is well-known that \(-\Delta\) is a non-negative self-adjoint operator with purely discrete spectrum.
In Theorem~\ref{thm:main_delta} we will also consider
\begin{itemize}	
	\item
	\emph{\(\delta\)-coupling conditions}: $f$ is continuous in $\mv$ and satisfies the relation
	\begin{equation}
	\label{eq:delta_conditions}
	\sum_{\me\in\mE_\mv} \partial_\me \psi(\mv) = \alpha_\mv \psi(\mv)
	\end{equation}
	for real coupling constants $( \alpha_\mv )_{\mv \in \mV}$.
\end{itemize}
 Since adding a dummy vertex with standard conditions on an edge does not change the spectral properties of the operators considered in this article, we assume that the underlying combinatorial graph \(\mG\) does not contain any loops, meaning that the initial and terminal vertex of any edge in \(\mE_\mathcal G\) do not coincide. 
 Indeed, this can be achieved by adding such a vertex on every such loop. 
 Furthermore, we assume that all vertices with Dirichlet conditions are of degree one. 
 Indeed, if a Dirichlet vertex was of higher degree $k > 1$, then we could replace it by $k$ distinct Dirichlet vertices of degree one without affecting the spectrum of the Laplacian.
 We also emphasize that the assumed connectedness of $\cG$ refers to connectedness \emph{after} all Dirichlet vertices have been split and turned into vertices of degree one as just described.

Our starting point is Conjecture 4.3 in \cite{HofmannKMP-20} which states that
for every compact connected metric graph there exists an orthonormal base $(\psi_n)_{n \in \NN}$ of Laplace-eigenfunctions with monotonous eigenvalues such that such that for a subsequence $(n_k)_{k \in \NN}$, all $\psi_{k_n}$ have \emph{full support}, i.e. none of these eigenfunctions vanishes identically on any edge of $\cG$.
Together with Proposition 3.4 in~\cite{HofmannKMP-20}, this would imply for this choice of eigenfunctions
\begin{equation}
	\label{eq:Pleijel}
	\limsup_{n \to \infty} \frac{\nu_n}{n}
	= 
	1
\end{equation}
where $\nu_n$ denotes the number of \emph{nodal domains} of $\psi_n$, that is the number of connected components in which $\psi_n$ is non-zero.
In particular, since one already knows the bound $\frac{\nu_n}{n} \leq 1$~\cite{Courant-1923}, the conjecture would show that latter bound is sharp.

Identity~\eqref{eq:Pleijel} holds on bounded intervals since Sturm's Oscillation Theorem~\cite{Sturm-1836} yields $\nu_n = n$ in this case.
As for higher dimensions, Pleijel's celebrated nodal domain theorem states that in dimension $2$, the corresponding $\limsup$ for the Laplacian is \emph{strictly smaller than $1$}~\cite{Pleijel-56}, with later improvements by Bourgain~\cite{Bourgain-13}.
The validity of the conjecture would thus show that Pleijel's Theorem is a truly higher-dimensional effect and cannot be reproduced on metric graphs which, despite having a topologically more complex structure than intervals, are locally still one-dimensional metric spaces.

There is a remarkable dichotomy today of special cases in which the conjecture is known to be true:
On the one hand, these are graphs with \emph{pairwise rationally dependent} side lengths, cf.~\cite[Theorem 4.2]{HofmannKMP-20}.
The crucial idea is to use their resonance and to construct a fully supported eigenfunction by putting cosine waves on all edges.
This allows for an easy construction of fully supported eigenfunctions, but in general (when the graph has loops, i.e. if it is not a tree), these eigenfunctions will be \emph{degenerate}, since eigenfunctions to the same eigenvalue can also be constructed by arranging sine waves on loops.
As we shall see, this degeneracity of eigenvalues prevents the application of certain techniques.
Furthermore, this construction strongly relies on the rational dependence which will break down under arbitrarily small perturbations.

On the other hand, empirically, \emph{rationally independent side lengths} will generate chaos and favour the emergence of fully supported eigenfunctions.
A relevant concept in this context are \emph{generic eigenfunctions}. 
These are eigenfunctions of the graph Laplacian which have multiplicity one and are non-zero at every vertex. 
In particular, they must have full support.
It is known that for a graph $\mG$, the set of vectors of side lengths $( \ell_\me )_{\me \in \mE}$ which lead to an infinite series of generic eigenfunctions is of the second Baire category~\cite{BerkolaikoL-17}, i.e. "generic eigenfunctions are generic". Moreover, for graphs with rationally independent edge lengths, it is was shown in \cite[Proposition A.1]{AlonBB-18} that there exists an infinite sequence of generic eigenfunctions and, thus, the conjecture also holds in that case.
It should be noted that many works explicitly demand the existence of an infinite family of generic eigenfunctions~\cite{Band-14}.
This dichotomy of methods -- resonance in the case of rationally dependent and chaos for rationally independent side lengths -- does not seem amenable to a unification.

Our first result surmounts this at least on trees and shows that \emph{every} metric tree graph has a sequence of generic eigenfunctions.

\begin{theorem}
\label{thm:main_tree}
	Let $\mathcal G$ be a connected and compact metric tree with standard conditions at all vertices of \(\mathcal G\). Then there exist an infinite sequence of generic eigenvalues, that is a strictly increasing sequence of eigenvalues \((\lambda_k)_{k\in\mathbb N}\) of multiplicity one and a sequence of corresponding eigenfunctions \((\psi_k)_{k\in\mathbb N}\) of \(-\Delta\), so that each \(\psi_k\) does not vanish at the vertices of \(\mathcal G\).
\end{theorem}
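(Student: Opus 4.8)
The plan is to argue by induction on the number of edges $|\mE|$ of $\cG$, exploiting the secular function $\Phi_{\cG}$ from Section~\ref{sec:secular_function} and, crucially, the explicit way it transforms under graph modifications. For the base case $|\mE|=1$ the graph is a single interval $[0,\ell]$ with Neumann conditions at both endpoints; its non-zero eigenvalues $(k\pi/\ell)^2$, $k\ge 1$, form a strictly increasing sequence of simple eigenvalues, and the eigenfunctions $x\mapsto\cos(k\pi x/\ell)$ take the values $1$ and $(-1)^k$ at the two vertices, so none of them vanishes at a vertex and we already have a generic sequence.

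For the inductive step assume $|\mE|\ge 2$ and that the claim holds for all compact connected metric trees with strictly fewer edges. Choose a leaf $\mv_0$ of $\cG$, let $\me_0=\mv_0\mv_1$ be the edge at $\mv_0$, of length $\ell_0$, and let $\cG':=\cG\setminus\{\mv_0\}$ be the metric tree obtained by deleting $\mv_0$ and $\me_0$; it is compact, connected, has $|\mE|-1$ edges, and $\mv_1$ is one of its vertices (possibly now of degree two, which does no harm). By the induction hypothesis $-\Delta_{\cG'}$ has a strictly increasing sequence $(\lambda'_k)_k$ of simple eigenvalues with eigenfunctions $\psi'_k$ vanishing at no vertex of $\cG'$; in particular $\psi'_k(\mv_1)\neq 0$. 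Parametrising $\me_0$ by $[0,\ell_0]$ with $\mv_0\sim 0$, any eigenfunction of $\cG$ at frequency $\omega$ restricts to $\me_0$ as $x\mapsto A\cos(\omega x)$, so attaching $\me_0$ amounts to imposing at $\mv_1$ the $\delta$-type condition $\sum_{\me\in\mE_{\cG'}}\partial_\me f(\mv_1)=-\omega\tan(\omega\ell_0)\,f(\mv_1)$ when $\cos(\omega\ell_0)\neq 0$, and $f(\mv_1)=0$ otherwise. Correspondingly the secular function of $\cG$ can be written as
\[
	\Phi_{\cG}(\omega)=\cos(\omega\ell_0)\,\Phi_{\cG'}(\omega)-\sin(\omega\ell_0)\,\Theta_{\cG'}(\omega),
\]
where $\Theta_{\cG'}$ is the companion function, obtained by reading off the appropriate kernel vector, that registers the solvability on $\cG'$ of the corresponding problem with a prescribed derivative inflow at $\mv_1$; producing such a clean identity is precisely what the new expression for the secular function is designed for.

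It remains to harvest a generic subsequence for $\cG$. A positive zero $\omega_0$ of $\Phi_{\cG}$ is a generic eigenvalue precisely if it is a simple zero and the eigenfunction does not vanish at any vertex of $\cG$; by the explicit form on $\me_0$, the latter means $\cos(\omega_0\ell_0)\neq 0$ together with non-vanishing at the vertices of $\cG'$. Away from the zeros of $\cos(\omega\ell_0)$, the zeros of $\Phi_{\cG}$ are the solutions of $\tan(\omega\ell_0)=R(\omega)$ with $R:=\Phi_{\cG'}/\Theta_{\cG'}$, a function that is real on the real axis; since $\omega\mapsto\tan(\omega\ell_0)$ runs monotonically through all of $\RR$ on each interval of length $\pi/\ell_0$ between consecutive poles, every such interval avoiding the poles of $R$ contains at least one zero of $\Phi_{\cG}$, in agreement with the Weyl asymptotics since $\cG$ carries one more edge than $\cG'$. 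From this abundance of zeros one has to isolate an infinite subsequence of \emph{simple} zeros at which, additionally, $\cos(\omega\ell_0)\neq 0$ and the $\cG$-eigenfunction does not vanish at any vertex of $\cG'$; here one uses the induction hypothesis on $\cG'$ (which, through the structure of $\Phi_{\cG'}$ and $\Theta_{\cG'}$, controls the location and order of the relevant zeros), the real-analyticity of $\Phi_{\cG}$ (so that zeros of infinite order are impossible and all but finitely many of the produced zeros are simple), and the transparent effect of the attachment on $\Phi$.

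The crux --- and where I expect the real work to lie --- is this last isolation step: ruling out that one of the two obstructions, $\cos(\omega\ell_0)=0$ or vanishing of the eigenfunction at a vertex of $\cG'$, persists for all but finitely many of the newly created eigenvalues. Each obstruction describes the common zero set of $\Phi_{\cG}$ with a further explicit real-analytic function --- again read off from the secular function via the kernel description of Proposition~\ref{prop:ker_A_0} --- and should be strictly thinner than the full spectrum; turning this expectation into a proof, and in particular excluding that attaching $\me_0$ conspires to destroy, along an entire tail, the genericity handed down from $\cG'$, is precisely what the real-valued expression for $\Phi_{\cG}$, its behaviour under graph modifications, and the topological readability of its kernel are meant to make possible --- the customary $\det(\operatorname{Id}-U(\omega))$ secular function gives no handle on such cancellations. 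It helps that one needs only an infinite subsequence, not all large eigenvalues, which leaves room to discard the bad frequencies.
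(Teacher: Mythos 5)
Your inductive set-up correctly reproduces the paper's recursion for the secular function (your identity $\Phi_{\cG}=\cos(\ell_0\omega)\,\Phi_{\cG'}-\sin(\ell_0\omega)\,\Theta_{\cG'}$ is exactly Lemma~\ref{lem:determinant_formulas}(i), with $\Theta_{\cG'}=\det A_\omega(\cG',\mv_1)$ the secular function of $\cG'$ with a Dirichlet condition at $\mv_1$), but the argument has a genuine gap exactly where you flag it, and the induction cannot close it. The induction hypothesis hands you generic eigenfunctions of $-\Delta_{\cG'}$ \emph{at the eigenvalues of $\cG'$}; the eigenvalues of $\cG$ sit at different frequencies, and the restriction of a $\cG$-eigenfunction to $\cG'$ satisfies an $\omega$-dependent $\delta$-type condition at $\mv_1$ rather than standard conditions, so genericity on $\cG'$ gives no control whatsoever over the vertex values of $\cG$-eigenfunctions. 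Counting solutions of $\tan(\omega\ell_0)=R(\omega)$ produces zeros of $\Phi_\cG$ in abundance, but says nothing about whether the corresponding kernel vectors vanish at vertices; ruling out that one of your two obstructions persists along an entire tail is precisely the content of the theorem, not a technicality one may defer. (A further inaccuracy: the paper explicitly states it \emph{cannot} show that the order of a zero of $\det A_\omega$ equals $\dim\ker A_\omega$, so ``simple zero of $\Phi_\cG$'' is not known to be equivalent to ``simple eigenvalue'' in this framework.)

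The paper's mechanism is global rather than inductive, and its engine is absent from your proposal. The three ingredients are: (i) for a tree, $\ker A_0$ is one-dimensional and spanned by the vector equal to $1$ in every vertex coordinate and $0$ in every derivative coordinate (Lemma~\ref{lem:kernel}); (ii) $\det A_\omega$ has a zero of order exactly one at $\omega=0$, with derivative $-\sum_{\me\in\mE}\ell_\me$ (Proposition~\ref{prop:determinant_non_zero_derivative} --- this is where the recursion formulas you quote are actually used); (iii) the function $h(\omega)=F(\omega)^2+(F'(\omega)-F'(0))^2+\lVert A_\omega-A_0\rVert_{\mathrm F}^2$, with $F=\det A_\omega$, is a nonnegative trigonometric polynomial vanishing at $0$, hence almost periodic, so it becomes arbitrarily small at arbitrarily large $\omega$; a quantitative Taylor argument (Lemma~\ref{lem:Taylor}, which needs the first-order vanishing from (ii)) then forces a genuine zero $\omega_n$ of $F$ nearby with $A_{\omega_n}\to A_0$. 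Normalized kernel vectors $x_n\in\ker A_{\omega_n}$ subconverge to a normalized element of $\ker A_0$, which by (i) is nonzero in all vertex coordinates, so the same holds for $x_n$ for large $n$. This ``almost-periodic return to $A_0$'' is the missing idea; without it, or a substitute for it, the isolation step you postpone remains unproven.
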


The proof of Theorem~\ref{thm:main_tree} relies on the secular function approach given in Section~\ref{sec:secular_function}.
A close look at the proof indicates the modifications necessary to also treat metric graphs (not only trees!) with at least one Dirichlet vertex and leads to our next result.
We comment in Remark~\ref{rem:why_only_trees} on the challenges to generalize Theorem~\ref{thm:main_tree} to general graphs without Dirichlet vertices.  

\begin{theorem}
\label{thm:main_Dirichlet}
	Let $\mathcal G$ be a connected and compact metric graph with at least one Dirichlet vertex.
	Then there exist an increasing sequence of eigenvalues \((\lambda_k)_{k\in\mathbb N}\) and a sequence of corresponding eigenfunctions \((\psi_k)_{k\in\mathbb N}\) of \(-\Delta\) with full support.
\end{theorem}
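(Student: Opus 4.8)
The plan is to take as input the secular function $F_{\mathcal G}$ constructed in Section~\ref{sec:secular_function}, together with its transformation rules under graph modifications and changes of vertex conditions, and to run an argument parallel to the proof of Theorem~\ref{thm:main_tree} in which a fixed Dirichlet edge plays the role that the leaf edge removed last plays in the tree induction. Since degree-two standard vertices are immaterial and Dirichlet vertices are assumed to be of degree one, I fix one Dirichlet vertex $\mv_0$ with incident edge $\me_0=\mv_0\mw_0$ of length $\ell_0$, and view $\mathcal G$ as obtained from $\mathcal G_0:=\mathcal G\setminus\{\me_0,\mv_0\}$ by attaching the pendant edge $\me_0$ carrying a Dirichlet condition at its loose end. (If $\mathcal G_0$ is empty or a single point, then $\mathcal G$ is an interval with mixed Dirichlet/standard or Dirichlet/Dirichlet conditions and the claim is classical, so one may assume $\mathcal G_0$ nondegenerate.)

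The core step is to exploit the explicit effect of this pendant-edge attachment on the secular function. Following the bookkeeping of Section~\ref{sec:secular_function}, I expect a formula of the shape $F_{\mathcal G}(\omega)=\sin(\omega\ell_0)\,A(\omega)-\cos(\omega\ell_0)\,B(\omega)$, where $A$ and $B$ are, up to normalisation and independently of $\ell_0$, the secular functions of $\mathcal G_0$ equipped at $\mw_0$ with standard, respectively Dirichlet, conditions. The decisive feature --- and the reason a Dirichlet vertex suffices while cycles without one resist the method, cf.\ Remark~\ref{rem:why_only_trees} --- is that on the pendant Dirichlet edge every eigenfunction restricts to $c\sin(\omega x)$, and the secular function records exactly this: one can select eigenvalues $\omega_k^2$ with $\sin(\omega_k\ell_0)\neq 0$, which forces $c\neq 0$ and hence full support already on $\me_0$. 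I would then produce such $\omega_k$ and propagate full support into $\mathcal G_0$ by an induction on $|\mE_\mathcal G|$: peeling off pendant standard edges one at a time (each removal again a controlled modification of the secular function) while never touching $\me_0$, down to a core graph with no pendant standard edge, on which one argues directly --- via the structure of $A$, $B$ and the intermediate value theorem applied to $F_{\mathcal G}$ on the intervals between consecutive zeros of $A$ --- that infinitely many of its eigenvalues carry an eigenfunction nonvanishing on every edge.

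The step I expect to be the real obstacle is the propagation of \emph{full support}, as opposed to the mere existence of eigenvalues: an intermediate value argument only locates zeros of $F_{\mathcal G}$, and to read off where the associated eigenfunction vanishes one must control the kernel of the secular matrix at $\omega=\omega_k$, equivalently the secular functions of all edge-deleted subgraphs (the minors featuring in Proposition~\ref{prop:ker_A_0}). Thus the heart of the proof is to show that, for infinitely many $k$, $\omega_k$ is \emph{not} simultaneously a zero of $F_{\mathcal G}$ and of any of these minors, so that the kernel producing $\psi_k$ genuinely spreads over all of $\mathcal G$. I would attack this with the same real-analyticity and sign-change accounting used in the tree case, together with the elementary fact that $\sin(\omega\ell_0)$ and $\cos(\omega\ell_0)$ cannot both be small, and a separation statement to the effect that $A$ and $B$ --- and more generally the relevant minors --- have no common zeros accumulating at infinity; this last point should itself follow inductively (applying the already-established statement to $\mathcal G_0$) or directly from Proposition~\ref{prop:ker_A_0}.
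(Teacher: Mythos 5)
Your overall strategy diverges from the paper's and, as written, has two genuine gaps that sit exactly at the point you yourself identify as the obstacle. First, the inference ``$\sin(\omega_k\ell_0)\neq 0$ forces $c\neq 0$'' is not valid: on the pendant edge the eigenfunction is $c\sin(\omega x)$ with $c\sin(\omega\ell_0)=\psi(\mw_0)$, so when $\sin(\omega\ell_0)\neq0$ one only gets $c\neq 0$ if and only if $\psi(\mw_0)\neq 0$, and nothing prevents an eigenfunction from vanishing identically on $\me_0$ (value and derivative both zero at $\mw_0$, which is compatible with the standard condition there). Whether the eigenfunction vanishes at $\mw_0$ is precisely the difficulty, not something you can legislate by choosing $\omega_k$ off the zero set of $\sin(\argmt\,\ell_0)$. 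Second, the ``separation statement'' that $A$ and $B$ (the secular functions of $\mathcal G_0$ with standard, resp.\ Dirichlet, conditions at $\mw_0$) have no common zeros accumulating at infinity is false in general: if $\mathcal G_0$ contains two further edges of equal length $\ell$ emanating from a common neighbour of $\mw_0$, one builds for every frequency $k\pi/\ell$ an eigenfunction supported on those two edges only; it vanishes at $\mw_0$ together with its derivative on the incident edge and is therefore an eigenfunction for \emph{both} vertex conditions at $\mw_0$, i.e.\ a common zero of $A$ and $B$. Proposition~\ref{prop:ker_A_0} concerns only $\omega=0$ and gives no control at infinity, and induction does not help, because full support of \emph{some} eigenfunctions of $\mathcal G_0$ does not exclude other, partially supported ones at the same frequencies. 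So the propagation of full support --- which is the heart of the proof --- remains open in your plan.

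The paper closes this gap by a different mechanism. It anchors the construction at the ground-state frequency $\omega_0>0$ of $\mathcal G$ itself (positive because of the Dirichlet vertex): Lemma~\ref{lem:sign-change-at-ground-state} shows that $\omega\mapsto\det A_\omega$ changes sign at $\omega_0$, using the identity \eqref{eq:add_edge_Neumann} together with the Hadamard-type strict monotonicity of the ground state in the length of the pendant Dirichlet edge; hence $\omega_0$ is a zero of odd order. Almost periodicity of the trigonometric polynomial $\omega\mapsto\det A_\omega$ (as in Lemma~\ref{lem:sequence}, with Lemma~\ref{lem:Taylor} applied to a zero of odd order rather than of order one) then produces zeros $\omega_n\to\infty$ with $A_{\omega_n}\to A_{\omega_0}$; normalized kernel vectors of $A_{\omega_n}$ converge along a subsequence to a normalized element of $\ker A_{\omega_0}$, which corresponds to the ground state and, by Kurasov's theorem \cite{Kurasov-19}, is nonzero at every vertex of $\mV\setminus\mV_D$. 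This convergence-to-the-ground-state argument is what replaces your minor-avoidance scheme, and you would need something of comparable strength to make your induction go through.
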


The proof of Theorem~\ref{thm:main_Dirichlet} is given in Section~\ref{sec:Dirichlet}.
Note that we do \emph{not} claim that the eigenvalues corresponding to the fully supported eigenfunctions in Theorem~\ref{thm:main_Dirichlet} have multiplicity one, even though this seems very plausible to us.
Indeed, we believe that the usual secular function approach as in~\cite{Band-14} would also yield this non-degeneracy of the eigenvalues but we are not going to elaborate the argument here since we focus on presenting our version of a secular function as a supplementary technique where the effects of modifications to the graph, the operator, and to the boundary conditions are more accessible.

In particular, we can obtain some more consequences and generalizations.
The first one are \emph{$\delta$-coupling conditions}:

\begin{theorem}
	\label{thm:main_delta}
	The statements of Theorems~\ref{thm:main_tree} and~\ref{thm:main_Dirichlet} remain valid if any number of standard conditions are replaced by \(\delta\)-coupling conditions as in~\eqref{eq:delta_conditions}.
\end{theorem}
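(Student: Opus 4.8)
The plan is to revisit the construction of the secular function $\cost$ from Section~\ref{sec:secular_function} and to isolate the contribution of a single vertex condition, so that replacing a standard condition by a $\delta$-coupling condition becomes a controlled, explicit perturbation. Concretely, the condition \eqref{eq:delta_conditions} with parameter $\alpha_\mv$ coincides with the standard condition precisely when $\alpha_\mv=0$, and the deviation enters the local matching relations only through the single scalar $\alpha_\mv\psi(\mv)$ on the right-hand side of the Kirchhoff balance. Since the derivative terms in \eqref{eq:delta_conditions} acquire a factor of the frequency $\omega$ once one passes to the secular function (each outgoing derivative of $\sin(\ell_\me\omega\,\argmt)$ or $\cos(\ell_\me\omega\,\argmt)$ produces such a factor), the $\delta$-term is of lower order in $\omega$ relative to the dominant Kirchhoff part. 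I expect this to take the form of an identity
$\cost_\delta(\omega)=\cost(\omega)+\omega^{-1}R(\omega)$,
where $\cost$ is the secular function of the corresponding graph carrying standard conditions at the vertices in question and $R$ is a finite sum of products of $\sin(\ell_\me\omega)$, $\cos(\ell_\me\omega)$ and the $\alpha_\mv$'s, hence bounded uniformly in $\omega$ on the real line; replacing several standard vertices by $\delta$-couplings simply iterates this.

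With such a decomposition in hand, the second step is to re-run the arguments behind Theorems~\ref{thm:main_tree} and~\ref{thm:main_Dirichlet}. Those arguments produce, for the standard secular function, a sequence of frequencies $\omega_k\to\infty$ at which $\cost$ has a (simple, in the tree case) zero whose eigenfunction does not vanish at any vertex; and they do so by locating sign changes -- or non-degenerate zeros -- of $\cost$ inside shrinking windows whose position is governed by a dominant term that is bounded away from $0$ on a definite portion of each window. Because the perturbation $\omega^{-1}R(\omega)$ tends to $0$ uniformly, for all sufficiently large $k$ the perturbed function $\cost_\delta$ still changes sign inside the same window, hence has a zero $\omega_k^\delta$ there; a Rouché-type argument, or the implicit function theorem applied to $\cost_\delta$ near $\omega_k$, upgrades this to a simple zero (whenever the original theorem asserted simplicity) and shows that $\omega_k^\delta$ is a small perturbation of $\omega_k$. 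This yields the eigenvalue $\lambda_k=(\omega_k^\delta)^2$, with the multiplicity-one statement exactly where Theorem~\ref{thm:main_tree} claims it.

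The third step is the non-vanishing at the vertices. In the secular-function formalism the value $\psi(\mv)$ of the eigenfunction attached to a zero $\omega_k^\delta$ is itself an explicit analytic expression -- a minor, or a directional derivative, of the matrix defining $\cost_\delta$ -- and by the same scaling heuristic it differs from its standard-condition counterpart by a term of order $\omega^{-1}$. Since in the proofs of Theorems~\ref{thm:main_tree} and~\ref{thm:main_Dirichlet} these vertex values are shown to be bounded away from $0$ along the relevant subsequence (this is precisely what ``generic eigenfunction'' and ``full support'' amount to), the same holds after the $O(\omega^{-1})$ perturbation for all large $k$. Collecting the three steps gives Theorem~\ref{thm:main_delta}; all of this would be carried out in Section~\ref{sec:delta}.

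The main obstacle, in my view, is not the perturbative estimate itself but the bookkeeping: one must ensure that the dominant term identified in the standard-condition proofs is genuinely unaffected by the $\delta$-modification -- that no $\delta$-vertex sits precisely where that dominant term is generated -- and that the windows in which sign changes are located can be chosen with a lower bound on the dominant term that is \emph{uniform in $k$}, so that a single $k$-independent threshold on $\omega^{-1}\|R\|_\infty$ suffices. If the construction in Section~\ref{sec:secular_function} concentrates the dominant behaviour on a pendant edge, or on a path to a Dirichlet vertex, one should additionally verify that $\delta$-couplings along that distinguished structure only rescale, rather than annihilate, the dominant term; in the worst case one chooses the distinguished edge or path to avoid all $\delta$-vertices, which is always possible after subdividing edges.
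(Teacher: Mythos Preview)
Your core observation matches the paper's exactly: the $\delta$-coupling condition at $\mv$ perturbs only the $\mv$-row of the secular matrix by the single diagonal entry $-\alpha_\mv/\omega$, giving $B_\omega=A_\omega+\omega^{-1}P$ with $P$ constant in $\omega$. The paper packages the remaining work into a general perturbation lemma (Proposition~\ref{lem:pertubation}): for any $B_\omega=A_\omega+\omega^{-1}P_\omega$ with $P_\omega$ smooth and bounded together with all derivatives, there exist $\omega_n\to\infty$ with $\det B_{\omega_n}=0$ and $B_{\omega_n}\to A_{\omega_0}$ in Frobenius norm.

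The main difference from your plan is that the paper stays at the matrix level rather than the determinant level, and this pays off precisely at your third step. Once $B_{\omega_n}\to A_{\omega_0}$, normalized kernel vectors of $B_{\omega_n}$ must converge to the one-dimensional kernel of $A_{\omega_0}$, and non-vanishing at the vertices is read off directly from the explicit limit vector -- no separate analysis of minors or directional derivatives is required. Your route via perturbing the determinant and then separately controlling vertex values would also work, but is more laborious.

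Your closing worry is based on a misreading of how Theorems~\ref{thm:main_tree} and~\ref{thm:main_Dirichlet} are actually proved. Those proofs do not isolate a dominant term on any distinguished pendant edge or path; the mechanism is the almost-periodicity of the trigonometric matrix $\omega\mapsto A_\omega$, which brings \emph{the entire matrix} back close to $A_{\omega_0}$ infinitely often, combined with the Taylor-type Lemma~\ref{lem:Taylor} to locate a nearby zero of the determinant. This mechanism is completely indifferent to which vertices carry $\delta$-couplings, so the uniformity and edge-avoidance concerns you raise do not arise, and no subdivision trick is needed.
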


Then, we can also modify the operator by adding a \emph{piecewise constant potential}, that is we choose real parameters $(q_\me)_{\me \in \mE}$ and consider the operator $H$ that acts edgewise as negative second derivative plus potential $- \frac{\mathrm{d}^2}{\mathrm{d} x_\me^2} \psi + q_\me \psi$.

\begin{theorem}
	\label{thm:main_potential}
	The statements of Theorems~\ref{thm:main_tree},~\ref{thm:main_Dirichlet}, and~\ref{thm:main_delta} remain valid if we add any real-valued, edgewise constant potential to the Laplacian.
\end{theorem}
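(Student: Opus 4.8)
The plan is to reduce Theorem~\ref{thm:main_potential} to Theorems~\ref{thm:main_tree}, \ref{thm:main_Dirichlet} and~\ref{thm:main_delta} by a perturbation argument at the level of the secular function. Fix edgewise constant values $q=(q_\me)_{\me\in\mE}$ and let $H$ be the associated operator. On an edge $\me=[0,\ell_\me]$ the equation $-\psi''+q_\me\psi=\omega^2\psi$ has, for $\omega^2>\max_{\me} q_\me$, the fundamental system $\cos(k_\me x)$, $k_\me^{-1}\sin(k_\me x)$ with the \emph{edgewise wave number} $k_\me=k_\me(\omega):=\sqrt{\omega^2-q_\me}$. First I would feed this fundamental system into the secular-function construction of Section~\ref{sec:secular_function} (and its $\delta$-variant underlying Section~\ref{sec:delta}). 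Since that construction was designed so that the dependence on each edge enters only through the edge data $(\ell_\me,\omega)$, the resulting secular function $F_q$ is obtained from the potential-free secular function $F_0$ (resp.\ from the $\delta$-secular function) simply by substituting, on every edge $\me$, the phase $\ell_\me\omega\mapsto\ell_\me k_\me(\omega)$, and possibly the amplitude normalizations $\omega^{-1}\mapsto k_\me(\omega)^{-1}$. In particular $F_q$ remains a real-analytic function of $\omega$ whose positive zeros are in bijection with the eigenvalues of $H$.

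Next I would carry out the asymptotic comparison. Since $k_\me(\omega)=\omega-\tfrac{q_\me}{2\omega}+O(\omega^{-3})$ as $\omega\to\infty$, one has $\ell_\me k_\me(\omega)=\ell_\me\omega-\tfrac{\ell_\me q_\me}{2\omega}+O(\omega^{-3})$, so the substitution above is a perturbation that vanishes as $\omega\to\infty$. Consequently $F_q(\omega)-F_0(\omega)\to 0$ as $\omega\to\infty$, uniformly on windows of fixed length translated to infinity, and the same holds for the first derivative and for the auxiliary functionals used in the earlier proofs to encode that the reconstructed eigenfunction does not vanish at a given vertex (resp.\ does not vanish identically on a given edge). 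When $\delta$-couplings are present one runs the same comparison starting from the $\delta$-secular function behind Theorem~\ref{thm:main_delta} instead of from $F_0$; the $\alpha_\mv$-terms are already accounted for there, and the potential still enters on top as an $\omega\to\infty$-vanishing perturbation.

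With this in hand I would replay the proofs of Theorems~\ref{thm:main_tree}--\ref{thm:main_delta}. Those proofs produce, for every $N\in\NN$, some $\omega_N>N$ together with an open window $I_N\ni\omega_N$ on which $F_0$ (resp.\ the $\delta$-secular function) exhibits a sign change with a quantitative margin at the endpoints, and on which the vertex/edge nonvanishing functionals stay bounded away from $0$; on trees one moreover obtains that the relevant configuration is non-degenerate, i.e.\ the eigenvalue has multiplicity one. All of these are \emph{open} conditions, stable under $C^1$-small perturbations of the secular function and $C^0$-small perturbations of the functionals. Hence, for $N$ large enough, $F_q$ has a zero $\omega_N^q\in I_N$, which is simple in the tree case, and the eigenfunction of $H$ reconstructed at $(\omega_N^q)^2$ does not vanish at any vertex (tree and $\delta$-tree case) or does not vanish identically on any edge (Dirichlet case). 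Letting $N\to\infty$ and passing to a subsequence yields the asserted infinite sequence of generic, resp.\ fully supported, eigenfunctions of $H$.

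The main obstacle is to verify that the earlier proofs really only use such stable features of the secular function: the classical construction for rationally dependent side lengths exploited exact resonances, i.e.\ the periodicity of $\omega\mapsto\cos(\ell_\me\omega)$, and that periodicity is destroyed by the nonlinear phase shift $\omega\mapsto k_\me(\omega)=\sqrt{\omega^2-q_\me}$. The whole point of the secular function of Section~\ref{sec:secular_function} is that its proofs are robust in exactly this sense, so the remaining work is bookkeeping: tracking the uniformity of the $O(\omega^{-1})$ error terms across the family of windows $I_N$ as $N\to\infty$, and observing that the low-energy regime $\omega^2\le\max_{\me} q_\me$, where some $k_\me$ become imaginary and $\cos,\sin$ turn into $\cosh,\sinh$, is irrelevant because only arbitrarily large eigenvalues are needed.
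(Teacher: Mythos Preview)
Your proposal is correct and follows essentially the same route as the paper: the secular matrix with edgewise constant potential is obtained from the free one by the phase substitution $\ell_\me\omega\mapsto\ell_\me\sqrt{\omega^2-q_\me}$, which is an $O(\omega^{-1})$ perturbation (the paper records this as $B_\omega=A_\omega+\omega^{-1}P_\omega$ and checks the required bounds on $P_\omega$ and all its derivatives in a separate lemma), after which the perturbation machinery behind Theorems~\ref{thm:main_tree}--\ref{thm:main_delta} applies verbatim. The only organizational difference is that the paper always compares directly to the \emph{standard} matrix $A_\omega$ rather than to the $\delta$-secular function, absorbing both the $\delta$-couplings and the potential into the single perturbation $\omega^{-1}P_\omega$, since only the standard $A_\omega$ is a genuine trigonometric polynomial and hence almost periodic---but this is exactly what your remark ``the $\alpha_\mv$-terms are already accounted for there'' amounts to.
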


Note that step-function potentials that take distinct values on a finite number of connected components of the metric graph are incorporated within Theorem~\ref{thm:main_potential} after adding additional standard vertices at all points where the potential switches values.
It seems therefore plausible that a perturbation argument might also yield a similar result for potentials which can be sufficiently well approximated by step functions, but we are not going to pursue this idea further in this article.
The proof of Theorem~\ref{thm:main_delta} is in Section~\ref{sec:delta} and the proof of Theorem~\ref{thm:main_potential} in Section~\ref{sec:potential}.

\section{A secular function for pedestrians and its application to trees}
\label{sec:secular_function}

In this section, we reformulate the eigenvalue problem for $- \Delta$ in terms of a frequency-dependent finite dimensional matrix $A_\omega$ which has non-trivial kernel if and only if $\omega^2$ is an eigenvalue of $-\Delta$.
The function $\omega \mapsto \det A_\omega$ is a \emph{secular function}, that is a holomorphic function the non-negative zeros of which are in one-to-one correspondence with $\sigma(-\Delta)$.

Secular functions are a common tool in the spectral theory of metric graphs.
They are usually constructed as a determinant of a matrix on a \emph{complex} $2 \lvert \mE \rvert$-dimensional vector space using a scattering approach at the vertices \cite{KottosS-99}.
This leads to an a priori complex-valued secular function, even though it can be made real-valued by a multiplication with a phase, see~\cite[Remark 2.10]{BerkolaikoK-12-book}, and allows to incorporate all possible choices of self-adjoint vertex conditions in one theory~\cite{BolteE-09}.

Our approach seems to be a bit more hands-on. 
We assume continuity at the vertices, which is not required for all self-adjoint vertex conditions, but which is certainly the case for standard, Dirichlet and $\delta$-coupling conditions.
Our matrices are purely real-valued which would a priori require a real $4 \lvert \mE \rvert$-dimensional vector space (recall that this is the real dimension of a complex $2 \lvert \mE \rvert$-dimensional vector space), but thanks to continuity of the eigenfunctions, this reduces to $\lvert \mV \rvert + 2 \lvert \mE \rvert$ dimensions.

More precisely, to define the secular function, we use the following reductions:
At fixed eigenvalue $\lambda = \omega^2$, $\omega \geq 0$, an eigenfunction $\psi$ is uniquely determined by
\begin{itemize}
	\item
	its $\lvert \mV \rvert$ many values at vertices $\{ \psi(\mv) \}_{\mv \in \mV}$,
	\item
	its $2 \lvert \mE \rvert$ many outward derivatives at the end points of edges
	$\{ \partial_\me \psi(\mv) \}_{\mv \in \mV, \me \in \mE_\mv}$.
\end{itemize}
On any edge \(\me = \mv \mw\), an eigenfunction is a linear combination of $\sin$ and $\cos$ waves with frequency $\omega$.
This implies the following consistency condition between function values and derivatives at the endpoints
\begin{equation*}
	\label{eq:consistency-conditions}
	\begin{pmatrix}
		\psi(\mw)\\
		\partial_\me \psi(\mw)\\
	\end{pmatrix}
	=
	\begin{pmatrix}
		\cos(\ell_\me \omega)   & -\frac{\sin(\ell_\me \omega)}{\omega}\\
		- \omega\sin(\ell_\me \omega) & -\cos(\ell_\me \omega)\\
	\end{pmatrix}
	\begin{pmatrix}
		\psi(\mv)\\
		\partial_\me \psi(\mw)\\
	\end{pmatrix}
\end{equation*}
which, for $\omega > 0$, can be rewritten as
\begin{equation}
	\label{eq:consistency-conditions-rescaled}
	\begin{pmatrix}
		\psi(\mw)\\
		\frac{\partial_\me \psi(\mw)}{\omega}\\
	\end{pmatrix}
	=
	\begin{pmatrix}
		\cos(\ell_\me \omega)   & -\sin(\ell_\me \omega)\\
		- \sin(\ell_\me \omega) & -\cos(\ell_\me \omega)\\
	\end{pmatrix}
	\begin{pmatrix}
		\psi(\mv)\\
		\frac{\partial_\me \psi(\mv)}{\omega}\\
	\end{pmatrix},
\end{equation}
or equivalently
\begin{equation}
	\label{eq:consistency-conditions-rescaled-rearranged}
	\begin{pmatrix}
		1 & -\cos(\ell_\me \omega)   & \sin(\ell_\me \omega) & 0\\
		0 & \sin(\ell_\me \omega) & \cos(\ell_\me \omega) & 1\\
	\end{pmatrix}
	\begin{pmatrix}
		\psi(\mw)\\
		\psi(\mv)\\
		\frac{\partial_\me \psi(\mv)}{\omega}\\
		\frac{\partial_\me \psi(\mw)}{\omega}
	\end{pmatrix}
	=0.
\end{equation}
Identity~\eqref{eq:consistency-conditions-rescaled-rearranged} holds on every edge $\me \in \mE$ which leads to $2 \lvert \mE \rvert$ equations with $(\lvert\mV\rvert+2\lvert \mE\rvert )$ variables. 
We combine them with the \(\lvert\mV\rvert\) many vertex conditions (as of now we focus on standard or Dirchlet conditions and discuss $\delta$-coupling conditions in Section~\ref{sec:delta}) into a $\omega$-dependent \( (\lvert\mV\rvert+2\lvert \mE\rvert ) \times ( \lvert\mV\rvert+2\lvert \mE\rvert ) \)-square matrix
\begin{equation}\label{eq:definition-A-omega}	
	A_\omega (\mathcal G,\mathsf V_D)
	=
	\begin{pmatrix}
	\begin{matrix}
		\dots & 0 & \dots \\
		0 & 1 & 0 \\
		 &  \dots &
	\end{matrix}
	&
	\rvline
	&
		\begin{matrix}
		\dots & 1 & \dots & 1 & \dots \\
		&\dots & 0 & \dots & \\
		&&\dots&&
		\end{matrix}
	\\
	\hline
		\begin{matrix}
		1 & - \cos( \ell_\me \omega)\\
		  & \sin( \ell_\me \omega)\\
		  & \dots\\
		\end{matrix}
	&
	\rvline
	&		
		\begin{matrix}
		\sin( \ell_\me \omega) & \\
		\cos( \ell_\me \omega) & 1 \\
		& \dots\\
		\end{matrix}
	\end{pmatrix}.
\end{equation} 
The top $\lvert \mV \rvert$ rows of the matrix represent the vertex conditions, the bottom $2 \lvert \mE \rvert$ rows the consistency conditions \eqref{eq:consistency-conditions-rescaled-rearranged} across edges.
When appropriate, we omit the dependence on the graph $\mathcal G$ and the Dirichlet vertex set \(\mV_D\) and simply write $A_\omega$. 
We will also write \(A_\omega (\mathcal G)\) if \(\mV_D=\emptyset\) is empty, and \(A_\omega (\mathcal G,\mv)=A_\omega (\mathcal G,\mathsf V_D)\) if \(\mV_D=\{\mv\}\) is a one-element set.

Now, to every eigenfunction $\psi$ of \(-\Delta\) with eigenvalue $\lambda = \omega^2 > 0$, corresponds a uniquely determined vector $x = x(\psi) \in \ker A_\omega\subset \RR^{\lvert \mV \rvert + 2 \lvert \mE \rvert}$ given by
\begin{equation}\label{eq:discretization}
	x
	=
	\begin{pmatrix}
		\{ x(\mv) \}_{\mv \in \mV}
		\\
		\{ x(\mv,\me)\}_{\mv \in \mV, \me \in \mE_\mv}
	\end{pmatrix}
	=
	\begin{pmatrix}
		\{ \psi(v) \}_{\mv \in \mV}
		\\
		\{ \frac{\partial_\me \psi(\mv)}{\omega} \}_{\mv \in \mV, \me \in \mE_\mv}
	\end{pmatrix}
\end{equation}
and it is easy to see that vice versa each non-zero vector \(x\in\ker A_\omega\) gives rise to an eigenfunction \(\psi\). In other words, we have proved:

\begin{lemma}\label{lem:isomorphism}
For \(\omega> 0\) and \(\lambda=\omega^2\) the mapping
	\[\mathrm{Eig}(-\Delta,\lambda)\rightarrow \ker(A_\omega), \quad \psi\mapsto x(\psi)\]
is a well-defined isomorphism of vector spaces.
In particular, $\lambda$ is an eigenvalue of $-\Delta$ if and only if $\det A_\omega = 0$.
\end{lemma}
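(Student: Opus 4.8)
The plan is to check directly that the map $\psi\mapsto x(\psi)$ from~\eqref{eq:discretization} is a well-defined linear map into $\ker A_\omega$, and then to build a two-sided inverse by solving, edge by edge, the linear ordinary differential equation $-\psi''=\omega^2\psi$. The hypothesis $\omega>0$ enters only to make the rescaled quantities $\partial_\me\psi(\mv)/\omega$ meaningful; the rest is linear algebra combined with existence and uniqueness for a second-order linear ODE with prescribed Cauchy data.

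\textbf{Well-definedness, linearity, injectivity.} Since elements of $\operatorname{dom}(-\Delta)$ are continuous at the vertices, $x(\psi)$ is well defined, and it depends linearly on $\psi$. Let $\psi$ satisfy $-\Delta\psi=\omega^2\psi$. On each edge $\me=\mv\mw$ the restriction $\psi|_\me$ solves $-\psi''=\omega^2\psi$, so its boundary data obey the rescaled transfer relation~\eqref{eq:consistency-conditions-rescaled}, equivalently~\eqref{eq:consistency-conditions-rescaled-rearranged}; these are precisely the bottom $2|\mE|$ rows of $A_\omega$ evaluated at $x(\psi)$. The row of $A_\omega$ attached to a Dirichlet vertex $\mv$ reads $x(\mv)=\psi(\mv)=0$, which is the Dirichlet condition, while the row attached to a standard vertex $\mv$ reads $\sum_{\me\in\mE_\mv}x(\mv,\me)=\omega^{-1}\sum_{\me\in\mE_\mv}\partial_\me\psi(\mv)=0$, which is~\eqref{eq:kirchhoff-condition}. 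Hence $x(\psi)\in\ker A_\omega$. For injectivity, $x(\psi)=0$ forces $\psi|_\me$ to solve the ODE with vanishing Cauchy data at the initial vertex of $\me$, so $\psi|_\me\equiv0$ on every edge and $\psi\equiv0$.

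\textbf{Surjectivity.} Given $x\in\ker A_\omega$, define $\psi$ on each edge $\me=\mv\mw$ to be the unique solution of $-\psi''=\omega^2\psi$ with $\psi(\mv)=x(\mv)$ and outward derivative $\partial_\me\psi(\mv)=\omega\,x(\mv,\me)$ at the initial vertex $\mv$. The transfer relation~\eqref{eq:consistency-conditions-rescaled} then yields $\psi(\mw)=\cos(\ell_\me\omega)x(\mv)-\sin(\ell_\me\omega)x(\mv,\me)$ and $\omega^{-1}\partial_\me\psi(\mw)=-\sin(\ell_\me\omega)x(\mv)-\cos(\ell_\me\omega)x(\mv,\me)$ at the terminal vertex $\mw$; the two consistency rows of $A_\omega$ associated with $\me$, which $x$ satisfies, state exactly that these equal $x(\mw)$ and $x(\mw,\me)$. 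Therefore all edges incident to a vertex $\mv$ assign it the same value $x(\mv)$, so $\psi$ is continuous and edgewise $H^2$ with $\partial_\me\psi(\mv)=\omega\,x(\mv,\me)$ for every $\me\in\mE_\mv$. The top rows of $A_\omega$ now provide the vertex conditions: $\psi(\mv)=x(\mv)=0$ at Dirichlet vertices and $\sum_{\me\in\mE_\mv}\partial_\me\psi(\mv)=\omega\sum_{\me\in\mE_\mv}x(\mv,\me)=0$ at standard ones. Hence $\psi\in\operatorname{dom}(-\Delta)$, $-\Delta\psi=\omega^2\psi$, and $x(\psi)=x$.

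Together these steps show $\psi\mapsto x(\psi)$ is an isomorphism, so $\dim\mathrm{Eig}(-\Delta,\lambda)=\dim\ker A_\omega$; since $-\Delta$ has purely discrete spectrum, $\lambda=\omega^2$ is an eigenvalue precisely when $\ker A_\omega\ne\{0\}$, i.e.\ when $\det A_\omega=0$. I do not expect a genuine obstacle here: the only delicate point is consistently tracking the orientation of each edge and the sign of the outward derivative at its initial versus terminal vertex, so that~\eqref{eq:consistency-conditions-rescaled} is applied in the correct direction; with~\eqref{eq:consistency-conditions-rescaled-rearranged} in hand this reduces to mechanical verification.
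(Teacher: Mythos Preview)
Your proof is correct and follows exactly the route the paper intends: the paper merely says ``it is easy to see that vice versa each non-zero vector \(x\in\ker A_\omega\) gives rise to an eigenfunction \(\psi\)'' and states the lemma, so your argument is precisely the natural fleshing-out of that remark, with the same edgewise Cauchy-data construction for the inverse.
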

The next lemma shows that this secular function $\omega \mapsto \det A_\omega$ is well-defined:

\begin{lemma}
The above defined secular function $\omega \mapsto \det A_\omega$ does not change when
\begin{enumerate}[(a)]
\item the orientation of an edge is inverted,
\item an edge is subdivided into two edges by a vertex of degree $2$ with standard conditions on the new vertex.
\end{enumerate}
\end{lemma}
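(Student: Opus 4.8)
The plan is to verify both invariances by directly tracking how the matrix $A_\omega$ changes and checking that the determinant is unaffected, up to sign and to harmless row/column operations.

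For part (a), suppose we invert the orientation of an edge $\me = \mv\mw$, so that $\mv$ becomes the terminal and $\mw$ the initial vertex. This swaps the roles of $\mv$ and $\mw$ in the two consistency rows associated to $\me$, i.e. in~\eqref{eq:consistency-conditions-rescaled-rearranged} the columns indexed by $\psi(\mw)$ and $\psi(\mv)$ are exchanged, and likewise the columns indexed by $\partial_\me\psi(\mv)/\omega$ and $\partial_\me\psi(\mw)/\omega$. Thus passing from $A_\omega(\mathcal G)$ to the matrix of the reoriented graph amounts to swapping two pairs of columns, which multiplies the determinant by $(-1)^2 = 1$. One should also note that the vertex-condition rows are unchanged: standard conditions at $\mv$ or $\mw$ involve the sum over all incident edges of the outward derivatives, and the outward derivative $\partial_\me\psi(\mv)$ does not depend on which endpoint is called "initial"; Dirichlet rows only involve vertex values. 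Hence $\det A_\omega$ is literally unchanged. (Alternatively, one can observe that the two rows of~\eqref{eq:consistency-conditions-rescaled-rearranged} are simply interchanged and one sign flips under reorientation, but the cleanest bookkeeping is the column-swap argument.)

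For part (b), let $\me = \mv\mw$ of length $\ell_\me$ be subdivided by a new degree-$2$ vertex $\mathsf u$ with standard conditions, producing edges $\me_1 = \mv\mathsf u$ of length $a$ and $\me_2 = \mathsf u\mw$ of length $b$ with $a + b = \ell_\me$. The new matrix $A_\omega'$ has one extra vertex column (for $\psi(\mathsf u)$), one extra vertex row (the standard/continuity-plus-Kirchhoff condition at $\mathsf u$), and the single pair of consistency rows for $\me$ is replaced by two pairs, for $\me_1$ and $\me_2$, introducing the new derivative columns $\partial_{\me_1}\psi(\mathsf u)/\omega$ and $\partial_{\me_2}\psi(\mathsf u)/\omega$; so the size grows by $1 + 2 = 3$ in each dimension. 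The key computation is a block Laplace/Schur expansion: the three new columns and three new rows (the two consistency rows for $\me_2$ plus the Kirchhoff row at $\mathsf u$, together with the three columns $\psi(\mathsf u)$, $\partial_{\me_2}\psi(\mathsf u)/\omega$, $\partial_{\me_2}\psi(\mw)/\omega$, say) form a block whose elimination should reproduce exactly the old consistency rows for $\me$ with the angle-addition identities $\cos((a+b)\omega) = \cos(a\omega)\cos(b\omega) - \sin(a\omega)\sin(b\omega)$ and $\sin((a+b)\omega) = \sin(a\omega)\cos(b\omega) + \cos(a\omega)\sin(b\omega)$. In other words, the $2\times 2$ transfer matrix in~\eqref{eq:consistency-conditions-rescaled} for a path of length $a+b$ is the product of the transfer matrices for lengths $a$ and $b$ — this is precisely the group law for the matrices $\begin{pmatrix}\cos\theta & -\sin\theta\\ -\sin\theta & -\cos\theta\end{pmatrix}$ (equivalently, each equals a reflection, and the product of two reflections composes the angles). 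Carrying out the Schur complement carefully, one finds $\det A_\omega' = \pm \det A_\omega$, and a check on a trivial example (or tracking the signs in the elimination) pins down the sign as $+1$; since we are only interested in the zero set, even a sign ambiguity would be immaterial, but we record that the secular function is genuinely unchanged.

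The main obstacle is the bookkeeping in part (b): one must set up the block decomposition of $A_\omega'$ so that the eliminated block is invertible for all $\omega$ of interest (its determinant will be something like $\sin(b\omega)$ up to units, so some care — or a limiting argument / the fact that $\det A_\omega'$ and $\det A_\omega$ are both entire in $\omega$ and agree off a discrete set — is needed to conclude equality everywhere), and then recognize the trigonometric identities emerging from the Schur complement. Once the correct block is identified, the computation is the $2\times 2$ transfer-matrix multiplication and is routine; I would present it as a single displayed matrix identity rather than entry-by-entry.
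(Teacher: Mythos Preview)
Your argument for part (a) has a genuine gap. You correctly observe that reorienting $\me$ changes only the two consistency rows for $\me$, and that within those rows the entries in columns $\psi(\mv),\psi(\mw)$ (resp.\ $\partial_\me\psi(\mv)/\omega,\partial_\me\psi(\mw)/\omega$) are interchanged. But you then conclude that ``passing from $A_\omega(\mathcal G)$ to the matrix of the reoriented graph amounts to swapping two pairs of columns.'' This is false: a column swap is a \emph{global} operation, and swapping these columns would also alter the vertex-condition rows for $\mv$ and $\mw$ (the $1$'s in the standard-condition rows would land in the wrong derivative columns) as well as the consistency rows of any other edge incident to $\mv$ or $\mw$ (through their $\psi(\mv),\psi(\mw)$ entries). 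You even remark that the vertex-condition rows are unchanged, which directly contradicts the column-swap picture. The fact that two rows happen to look like a column-swapped version of themselves says nothing about the determinant of the full matrix. The paper's fix is different: since only those two rows change, one shows that the new pair of consistency rows equals a $2\times 2$ matrix of determinant $1$ times the old pair, i.e.\ $A_\omega'$ is obtained from $A_\omega$ by left multiplication with a block-diagonal matrix (identity except for a $2\times 2$ rotation block) of determinant $1$. Your parenthetical alternative (``the two rows are simply interchanged and one sign flips'') is also not what actually happens; a direct check shows the new rows are genuine linear combinations of the old ones, not a signed permutation.

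For part (b) your plan is sound and essentially equivalent to what the paper does. The paper carries out the elimination explicitly as a sequence of row/column operations and three successive Laplace expansions, arriving at the original consistency rows for the merged edge via the angle-addition formulas; your Schur-complement language and the transfer-matrix composition $\begin{pmatrix}\cos\theta&-\sin\theta\\-\sin\theta&-\cos\theta\end{pmatrix}$ are the same mechanism in disguise. Two comments: first, your choice of the $3\times 3$ block is slightly off (you include the column $\partial_{\me_2}\psi(\mw)/\omega$, which is an ``old'' column, while omitting the new column $\partial_{\me_1}\psi(\mathsf u)/\omega$ that the Kirchhoff row at $\mathsf u$ actually hits), so the bookkeeping needs care. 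Second, the invertibility obstruction you flag is real for a naive Schur complement, but the paper's row-operation route sidesteps it entirely since no division ever occurs; your analytic-continuation workaround would also work.
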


\begin{proof}
	For simplicity, we only consider vertices with standard conditions since Dirichlet conditions lead to very similar calculations.
	We frequently use the fact that simultaneous permutations of lines and columns as well as adding multiples of a row or column to another one will not affect the determinant. 
	Denoting the edge by $\me = \mv \mw$, (a) is shown by demonstrating that the following two determinants are equal:
	\begin{align}
	\label{eq:det_oriented_1}
	\det
	\begin{blockarray}{cccccc}
		& \mv & \mw & \partial_\me \mw / \omega & \partial_\me \mv / \omega & \text{other}\\
		\begin{block}{c(ccccc)}
  		\mw &	0 & 0 & 1 & 0 & \ast  \\
 		\mv &	 0 & 0 & 0 & 1 & \ast \\
 		\partial_\me \mv/\omega & 1 & - \cos(\ell_\me \omega) & \sin(\ell_\me \omega) & 0 & 0  \\
 		\partial_\me \mw/\omega & 0 & \sin(\ell_\me \omega) & \cos(\ell_\me \omega) & 1 & 0  \\
 		\text{other} &	 \ast & \ast & 0 & 0 & \ast \\
		\end{block}
	\end{blockarray}
	\end{align}
	and
	\begin{align}
	\label{eq:det_oriented_2}
	\det
	\begin{blockarray}{cccccc}
		& \mv & \mw & \partial_\me \mw / \omega & \partial_\me \mv / \omega & \text{other}\\
		\begin{block}{c(ccccc)}
  		\mw &	0 & 0 & 1 & 0 & \ast  \\
 		\mv &	 0 & 0 & 0 & 1 & \ast \\
 		\partial_\me \mv/\omega & \sin(\ell_\me \omega) & 0 & 1 & \cos(\ell_\me \omega) & 0  \\
 		\partial_\me \mw/\omega & - \cos(\ell_\me \omega) & 1 & 0 & \sin(\ell_\me \omega) & 0  \\
 		\text{other} &	 \ast & \ast & 0 & 0 & \ast \\
		\end{block}
	\end{blockarray}
	\end{align}
	Since the matrix in \eqref{eq:det_oriented_1} results from the one in	\eqref{eq:det_oriented_2} by multiplication from the left with the matrix 
	\[
	\begin{pmatrix}
		1 & & & &  \\
		& 1 & & & \\
		  & & 
		  \begin{matrix}
		  \sin(\ell_\me \omega) & - \cos(\ell_\me \omega)\\
		  \cos(\ell_\me \omega) & \sin(\ell_\me \omega)\\
		  \end{matrix}
		  & &\\
		& & & \dots & \\ 
		 & & & & 1\\
		\end{pmatrix},
	\]
	which itself has determinant \(1\), (a) follows.
	
	For (b), let us denote the original edge $\tilde \me = \mathsf u \mw$ which is subdivided in two edges $\me = \mathsf u \mv$ and $\mf = \mv \mw$ by inserting a new vertex $\mv$ on $\tilde \me$.
	We describe a sequence of steps which reduce the secular function of the extended graph to the one of the original one. 
	After simultaneous permutations of lines and columns, the matrix corresponding to the graph with extra vertex will be
	\begin{equation}
	\label{eq:huge_determinant}
	\begin{blockarray}{ccccccccc}
		& \mathsf u & \mv & \mw & \partial_\me {\mathsf u} / \omega & \partial_\me \mv / \omega & \partial_\mf \mv / \omega & \partial_\me \mw / \omega & \text{other}\\
	\begin{block}{c(cccccccc)}
  	\mathsf u 					&  0   & 0  & 0   & 1   &  0 & 0 & 0 & \ast \\
	\mv		  					&  0   & 0  & 0   & 0   & 1 & 1 & 0 & 0  	\\
	\mw		  					&  0   & 0  & 0   & 0   & 0 & 0 & 1 & \ast  \\
	\partial_\me {\mathsf u}/\omega	& \sine& 0  & 0   &\cose& 1 & 0 & 0 & 0	    \\
	\partial_\me \mv / \omega 	&-\cose& 1  & 0   &\sine& 0 & 0 & 0 & 0     \\
	\partial_\mf \mv / \omega 	&  0   &\sinf& 0  & 0   & 0 &\cosf& 1 & 0   \\
	\partial_\me \mw / \omega	&  0   &-\cosf& 1 & 0  & 0 &\sinf & 0 & 0   \\
	\text{other}				& \ast & 0 & \ast   & 0   & 0 & 0 & 0 & \ast  \\
	\end{block}
	\end{blockarray}.
	\end{equation}
	We now perform the following sequence of manipulations, all of which leave the determinant of~\eqref{eq:huge_determinant} invariant:
	\begin{itemize}
	\item
	subtract the $\partial_\mf \mv / \omega$ column from the $\partial_\me \mv / \omega$ column,
	\item
	develop along the $\mv$ row (which now has only one nonzero entry),
	\item
	in the resulting minor subtract $\sinf$ times the $\partial_\me \mv / \omega$ row from the $\partial_\mf \mv / \omega$ row and add it $\cosf$ times to the $\partial_\mf \mw / \omega$ row,
	\item 
	develop along the $\mv$ column (which has only one nonzero entry),
	\item
	in the resulting minor add $\cose$ times the $\partial_\me {\mathsf u} / \omega$ row to the $\partial_\mf \mv / \omega$ row and $\sine$ times $\partial_\me {\mathsf u} / \omega$ to the $\partial_\me {\mathsf u} / \omega$ row,
	\item
	develop along the $\partial_\me \mv / \omega$ column (which has only one nonzero entry).
	\end{itemize}
	This simplifies the determinant of~\eqref{eq:huge_determinant} to
	\begin{align*}
	\det
	\begin{blockarray}{cccccc}
		& \mv & \mw & \partial_\me \mw / \omega & \partial_\me \mv / \omega & \text{other}\\
		\begin{block}{c(ccccc)}
  {\mathsf u} &	0 & 0 & 1 & 0 & \ast  \\
 		\mw   &	 0 & 0 & 0 & 1 & \ast \\
 		\partial_\me \mv/\omega & \sin(\ell_{\tilde\me} \omega) & 0 & 1 & \cos(\ell_{\tilde \me} \omega) & 0  \\
 		\partial_\me \mw/\omega & - \cos(\ell_{\tilde \me} \omega) & 1 & 0 & \sin(\ell_{\tilde \me} \omega) & 0  \\
 		\text{other} &	 \ast & \ast & 0 & 0 & \ast \\
		\end{block}
	\end{blockarray}
	\end{align*}
	where we used that due to $\ell_{\tilde \me} = \ell_\me + \ell_\mf$ we have	
	\begin{align*}
	\cos(\ell_{\tilde \me} \omega)
	&=
	\cose \cosf - \sine \sinf
	,
	\quad
	\text{and}
	\\
	\cos(\ell_{\tilde \me} \omega)
	&=
	\sine \cosf + \cose \sinf.
	\end{align*}
	This determinant corresponds to the structure of the original graph.
\end{proof}

Nevertheless, there is a common problem with secular matrices: the isomorphism between $\mathrm{Eig}(-\Delta,\lambda)$ and $\ker(A_\omega)$ in Lemma~\ref{lem:isomorphism} does not persist at $\omega = 0$, i.e. for the matrix
\[
	A_0
	=
	\begin{pmatrix}
		\bigzero
	&
	\rvline
	&
		\begin{matrix}
		\dots & 1 & \dots & 1 & \dots \\
		1 & \dots 1 & \dots & 1 \\
		&&\dots&&
		\end{matrix}
	\\
	\hline
		\begin{matrix}
		1 & - 1\\
		  & 0\\
		  & \dots\\
		\end{matrix}
	&
	\rvline
	&		
		\begin{matrix}
		0 & \\
		1 & 1 \\
		& \dots\\
		\end{matrix}
	\end{pmatrix}.
\] 
Indeed, bearing in mind that $\lim_{\omega \to 0} \sin(\ell_\me \omega) / \omega = \ell_\me$, the eigenspace corresponding to the eigenvalue $\lambda = 0$ would correspond to the kernel of the different matrix
\begin{equation}
	\label{eq:harmonic}
	\begin{pmatrix}
		\bigzero
	&
	\rvline
	&
		\begin{matrix}
		\dots & 1 & \dots & 1 & \dots \\
		1 & \dots 1 & \dots & 1 \\
		&&\dots&&
		\end{matrix}
	\\
	\hline
		\begin{matrix}
		1 & - 1\\
		  & 0\\
		  & \dots\\
		\end{matrix}
	&
	\rvline
	&		
		\begin{matrix}
		\ell_\me & \\
		1 & 1 \\
		& \dots\\
		\end{matrix}
	\end{pmatrix}.
\end{equation}
The matrix in~\eqref{eq:harmonic} describes \emph{edgewise linear}, that is harmonic functions on the graph and has \emph{one-dimensional kernel} whereas the kernel of $A_0$ will in general have higher dimension, see Proposition~\ref{prop:ker_A_0} below.
Fortunately, if the graph is a tree, the matrix $A_0$ retains useful properties which are reminiscent of the ground state of the Laplacian: 
\begin{lemma}
	\label{lem:kernel}
	If $\mathcal G$ is a metric tree with standard conditions at all vertices (i.e \(\mV_D=\emptyset\)), then the matrix $A_0$ has one-dimensional kernel and
	\[
		\ker A_0
		=
		\operatorname{Span}
		\{
			x_0
		\},
	\quad
	\text{where}
	\quad
	x_0
	:=
	\frac{1}{\sqrt{\lvert \mV \rvert}}
		\begin{pmatrix}
		1\\
		\dots
		\\
		1\\
		0\\
		\dots
		\\
		0\\			
		\end{pmatrix}
		.
	\]
\end{lemma}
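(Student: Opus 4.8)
The plan is to read the linear relations that define $\ker A_0$ straight off the displayed form of the matrix, and then to exploit that a tree has no independent cycles.

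First I would spell out membership in $\ker A_0$. Writing a vector in $\RR^{\lvert\mV\rvert+2\lvert\mE\rvert}$ as $x=(\{x(\mv)\}_{\mv\in\mV},\{x(\mv,\me)\}_{\mv\in\mV,\me\in\mE_\mv})$, the top $\lvert\mV\rvert$ rows encode the Kirchhoff relation $\sum_{\me\in\mE_\mv}x(\mv,\me)=0$ at every vertex $\mv$, while the two rows attached to an edge $\me=\mv\mw$ are \eqref{eq:consistency-conditions-rescaled-rearranged} evaluated at $\omega=0$ (so $\cos\to1$, $\sin\to0$), i.e.
\[
x(\mv)=x(\mw)\qquad\text{and}\qquad x(\mv,\me)+x(\mw,\me)=0.
\]
Since $\mathcal G$ is connected, the relations $x(\mv)=x(\mw)$ along all edges propagate to give $x(\mv)=c$ for a single constant $c$ and all $\mv\in\mV$.

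The substantive point is then to show that all the derivative entries vanish. I would fix a reference orientation $\me=\mv_\me\mw_\me$ of every edge and set $y_\me:=x(\mv_\me,\me)$; the relation $x(\mv,\me)+x(\mw,\me)=0$ says $x(\mw_\me,\me)=-y_\me$, and substituting this into the Kirchhoff relations rewrites them as $By=0$, where $B\in\RR^{\lvert\mV\rvert\times\lvert\mE\rvert}$ is the signed vertex–edge incidence matrix of $\mG$. For a connected tree one has $\lvert\mE\rvert=\lvert\mV\rvert-1=\rank B$, hence $\ker B=\{0\}$ — equivalently, the cycle space of a tree is trivial — so $y=0$ and all $x(\mv,\me)$ vanish. (Alternatively one can argue by induction, peeling off a leaf: the Kirchhoff condition at a degree-one vertex forces the flow on its pendant edge to be zero, and deleting that edge leaves a smaller tree.) I expect this cycle-space vanishing to be the only genuinely nontrivial step; it is exactly where the tree hypothesis is used, and on a graph with loops the same bookkeeping yields a kernel of dimension one plus the first Betti number, in line with Proposition~\ref{prop:ker_A_0}.

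Finally, combining the two steps, every $x\in\ker A_0$ equals $c\,(1,\dots,1,0,\dots,0)^{T}$, and conversely this vector trivially satisfies all three families of relations, so $\ker A_0=\operatorname{Span}\{x_0\}$ is one-dimensional. The normalization is immediate: $x_0$ has exactly $\lvert\mV\rvert$ nonzero entries, each equal to $1/\sqrt{\lvert\mV\rvert}$, so $\lVert x_0\rVert=1$.
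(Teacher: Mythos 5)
Your proof is correct. The first half (all vertex entries equal by connectedness, via the $\omega=0$ edge relations $x(\mv)=x(\mw)$ and $x(\mv,\me)+x(\mw,\me)=0$) is exactly the paper's argument; note in passing that your signed version of the second relation is the accurate one. For the second half the paper argues by leaf-peeling induction: at a leaf the standard condition forces the single outgoing derivative to vanish, the edge relation kills the derivative at the other endpoint, and one removes the leaf and iterates --- this is precisely the alternative you offer in parentheses. Your primary route instead packages the Kirchhoff relations as $By=0$ for the signed incidence matrix $B$ and invokes $\rank B=\lvert\mV\rvert-1=\lvert\mE\rvert$ on a connected tree, i.e.\ triviality of the cycle space. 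Both are valid; the incidence-matrix formulation is slightly more structural and has the advantage of immediately explaining the count $\dim\ker A_0=\beta(\mathcal G)+1$ of Proposition~\ref{prop:ker_A_0} for general connected graphs, whereas the paper's induction is more elementary and self-contained. No gaps.
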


\begin{proof}
	Clearly $x_0 \in \ker A_0$. 
	Furthermore, it follows from $A_0$ that any solution to $A_0 x = 0$ has the same entry in all of its $\lvert \mV \rvert$ many coordinates.
	It remains to see that all elements in the $\ker A_0$ vanish on the last $2 \lvert \mE \rvert$ entries.
	This is due to the tree structure: 
	The lower left submatrix of $A_0$ implies that entries corresponding to derivatives at both end points of an edge must be identical while the standard conditions, encoded in the top right submatrix of $A_0$, imply that all entries corresponding to outgoing derivatives at all leaves must vanish.
	Inductively, one can now remove leaves from the graph and eventually finds that all of the last $2 \lvert \mE \rvert$ entries of $x \in \ker A_0$ vanish.
\end{proof}

More generally, the matrix $A_0$ contains information on the topological structure of the unterlying graph:
\begin{proposition}
	\label{prop:ker_A_0}
	We have on connected graphs 
		\[\dim \ker A_0=\beta(\mathcal G)+1\]
	if  \(\mV_D=\emptyset\) and
		\[\dim \ker A_0=\# \mV_D +\beta(\mathcal G)-1\]
	if \(\#\mV_D\geq 1\) where $\beta(\mathcal G) = \# \mE - \# \mV + 1 \geq 0$ is the first Betty number of the graph.
\end{proposition}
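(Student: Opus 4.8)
The plan is to read the kernel of $A_0$ directly off its block structure and to split it into a part living in the vertex-value coordinates and a part living in the derivative coordinates, which decouple completely. Since $\cos 0 = 1$ and $\sin 0 = 0$, comparison with~\eqref{eq:definition-A-omega} shows that a vector $x = \bigl( \{x(\mv)\}_{\mv \in \mV},\ \{x(\mv,\me)\}_{\mv\in\mV,\,\me\in\mE_\mv}\bigr)$ lies in $\ker A_0$ if and only if it satisfies
\begin{enumerate}[(i)]
	\item $x(\mv) = 0$ for every $\mv \in \mV_D$ (Dirichlet rows);
	\item $x(\mv) = x(\mw)$ for every edge $\me = \mv\mw$ (first consistency row of $\me$);
	\item $x(\mv,\me) + x(\mw,\me) = 0$ for every edge $\me = \mv\mw$ (second consistency row of $\me$);
	\item $\sum_{\me \in \mE_\mv} x(\mv,\me) = 0$ for every $\mv \in \mV_N$ (Kirchhoff rows).
\end{enumerate}
Conditions (i)--(ii) constrain only the vertex values, while (iii)--(iv) constrain only the derivative entries, so $\ker A_0 = K_{\mathrm{val}} \oplus K_{\mathrm{der}}$, where $K_{\mathrm{val}}$ is cut out by (i)--(ii) and $K_{\mathrm{der}}$ by (iii)--(iv). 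By (ii) and connectedness of $\mathcal G$, all $x(\mv)$ coincide; this common value is free when $\mV_D = \emptyset$, so $\dim K_{\mathrm{val}} = 1$, whereas (i) forces it to vanish as soon as $\#\mV_D \geq 1$, so $\dim K_{\mathrm{val}} = 0$ in that case.

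For $K_{\mathrm{der}}$ I would use (iii) to identify the space of derivative vectors satisfying (iii) with $\RR^{\lvert\mE\rvert}$ via $y_\me := x(\mv,\me)$, where $\mv$ is the initial vertex of $\me = \mv\mw$ (so that $x(\mw,\me) = -y_\me$); this identification is a linear isomorphism. Under it, condition (iv) at a standard vertex $\mv$ reads $\sum_{\me:\,\mv\text{ initial}} y_\me - \sum_{\me:\,\mv\text{ terminal}} y_\me = 0$, i.e.\ $(\mathcal B y)_\mv = 0$, where $\mathcal B \in \RR^{\lvert\mV\rvert\times\lvert\mE\rvert}$ is the incidence matrix of $\mG$ with $\mathcal B_{\mv\me} = +1$ if $\mv$ is the initial vertex of $\me$ and $-1$ if it is the terminal one. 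Hence $\dim K_{\mathrm{der}} = \lvert\mE\rvert - \rank \mathcal B_N$, where $\mathcal B_N$ is the submatrix of $\mathcal B$ formed by the rows indexed by $\mV_N$. If $\mV_D = \emptyset$ then $\mathcal B_N = \mathcal B$ is the full incidence matrix of the connected graph $\mG$, whose rank is $\lvert\mV\rvert - 1$ (its left kernel is spanned by the all-ones vector), so $\dim K_{\mathrm{der}} = \lvert\mE\rvert - \lvert\mV\rvert + 1 = \beta(\mathcal G)$ and $\dim\ker A_0 = 1 + \beta(\mathcal G)$. If $\#\mV_D = d \geq 1$, then $\mathcal B_N$ has full row rank $\lvert\mV\rvert - d$: a vector in its left kernel, extended by $0$ over $\mV_D$, lies in the left kernel of $\mathcal B$, hence is a multiple of the all-ones vector, and vanishing on the nonempty set $\mV_D$ forces it to be zero. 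Therefore $\dim K_{\mathrm{der}} = \lvert\mE\rvert - \lvert\mV\rvert + d = \beta(\mathcal G) + d - 1$ and $\dim\ker A_0 = \#\mV_D + \beta(\mathcal G) - 1$.

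The only step beyond bookkeeping is this rank computation for $\mathcal B_N$: one must invoke the classical fact that the incidence matrix of a connected (multi)graph has corank exactly one, with left kernel spanned by the all-ones vector --- here it is relevant that $\mG$ is loopless by assumption, which is what makes $\mathcal B$ a genuine incidence matrix --- and then run the short left-kernel argument showing that deleting the $\mV_D$-rows does not lower the rank. I would also \emph{sanity-check} the degenerate cases: for a tree with $\mV_D = \emptyset$ one has $\beta(\mathcal G) = 0$, hence $\dim\ker A_0 = 1$, recovering Lemma~\ref{lem:kernel}, and in the Dirichlet case $\#\mV_D + \beta(\mathcal G) - 1 \geq 0$ as it must be.
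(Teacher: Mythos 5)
Your proof is correct, and it takes a genuinely different route from the paper's for the key dimension count. Both arguments begin with the same decoupling of $\ker A_0$ into a vertex-value part (spanned by the all-ones vector when $\mV_D=\emptyset$, trivial otherwise) and a derivative part $W$ cut out by the antisymmetry condition $x(\mv,\me)=-x(\mw,\me)$ together with the Kirchhoff sums at standard vertices. The paper then computes $\dim W=\beta(\mathcal G)$ by induction on the Betti number: it cuts $\mathcal G$ at a suitable vertex to lower $\beta$ by one, checks that this imposes exactly one new linear condition on $W$, and anchors the induction at trees via Lemma~\ref{lem:kernel}; moreover it only writes out the case $\mV_D=\emptyset$ and declares the Dirichlet case analogous. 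You instead identify $W$ with $\ker\mathcal B_N$ for the oriented incidence matrix restricted to the standard-vertex rows and invoke the classical fact that the incidence matrix of a connected loopless graph has corank one, with the short left-kernel argument showing that deleting the Dirichlet rows does not lower the rank. Your version buys a uniform treatment of both cases in one stroke and avoids the surgery induction (at the cost of importing one standard fact from algebraic graph theory, whose proof is anyway the same connectedness argument used for the vertex values); the paper's version stays entirely within its own toolkit of graph modifications and reuses the tree lemma as the base case. Your sign bookkeeping in the identification $y_\me = x(\mv,\me)$ and the final arithmetic both check out, and your observation that looplessness is needed for the identification and the incidence-matrix fact is exactly the hypothesis the paper arranges by inserting dummy vertices.
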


\begin{proof}
	We only prove the first statement, since the second one can be shown using similar arguments. We first observe that \(\ker A_0(\mathcal G)\) may decomposed as
		\[\ker A_0(\mathcal G)=V(\mathcal G)\oplus W(\mathcal G)\]
	where
		\[V(\mathcal G)=\mathrm{Span} \begin{pmatrix}
		1\\
		\dots
		\\
		1	
		\end{pmatrix}\subset \mathbb R^{|V|}\]
	and \(W(\mathcal G)\subset \mathbb R^{2|\mE|}\) is the space of vectors \(\{x(\mv,\me)\}_{\mv \in \mV, \me \in \mE_\mv}\) satisfying
		\[x(\mv,\me)= -x(\mw,\me)\]
	for all edges \(\me\in\mE\) connecting two vertices \(\mv\) and \(\mw\) and
		\[\sum_{\me\in\mE_\mv} x(\mv,\me)=0\]
	for all vertices \(\mv\in\mV\). We thus have to show that \(\dim W(\mathcal G)=\beta(\mathcal G)\). To do so we proceed by induction over \(\beta\).  For \(\beta=0\) the statement follows from Lemma \ref{lem:kernel}. For \(\beta >0\), we may cut \(\mathcal G\) through some appropriately chosen vertex \(\mv\) in \(\mathcal G\), so that \(\mv\) is split into two vertices \(\mv_1\) and \(\mv_2\) resulting in a new connected graph \(\tilde {\mathcal G}\) with first Betti number \(\beta(\tilde{\mathcal G})=\beta(\mathcal G)-1\), or equivalently
	\begin{equation}\label{eq:induction-step-betti-number}
		\beta(\mathcal G)=\beta(\tilde{\mathcal G})+1.
	\end{equation}
	Moreover, by definition of \(W(\mathcal G)\), we have
		\[W(\tilde{\mathcal G})=\left\{\{x(\mv,\me)\}_{\mv \in \mV, \me \in \mE_\mv}\in W(\mathcal G)~\big|~\sum_{\me\in\mE_{\mv_1}}x(\me,\mv)=0\right\},\]
	from which we infer that \(W(\tilde{\mathcal G})\) has codimension \(1\) in \(W(\mathcal G)\).
	\begin{equation}\label{eq:induction-step-dimension-W-G}
		\dim W(\mathcal G)=\dim W(\tilde{\mathcal G})+1.
	\end{equation}
	Thus, the stated equality \(\dim W(\mathcal G)=\beta(\mathcal G)\) follows by induction over \(\beta(\mathcal G)\) using \eqref{eq:induction-step-betti-number} and \eqref{eq:induction-step-dimension-W-G}.
\end{proof}

\begin{remark}
	\label{rem:why_only_trees}
	Proposition~\ref{prop:ker_A_0} can be seen as an illustration for the reason why we cannot go beyond trees if there are standard conditions at all vertices.
	Below, we will strongly rely on the fact that on trees, $\det A_\omega$ has a zero of first order at $\omega = 0$.
	If $\mG$ is not a tree, then it has non-zero Betti number, whence $A_0$ will have a higher dimensional kernel and $\omega \mapsto \det A_\omega$ will have a zero of higher order at $\omega = 0$.
	Therefore, the case of non-tree graphs with only standard conditions remains open.
\end{remark}

Now comes a subtle point:
In general, if $A_\omega$ has $k$-dimensional kernel, then the secular function $\omega \mapsto \det A_\omega$ must have a zero of \emph{at least} $k$-th order at $\omega$.
We have not been able to prove that the order of the zero is equal to the dimension of the kernel, even though this seems very plausible. 
This is actually one disadvantage of our approach compared to more classical forms of the secular function where it is known that the order of positive zeros coincide with the dimension of the eigenspace~\cite{BolteE-09}.
On metric trees, the matrix $A_0$ does have one-dimensional kernel, but we would like to use that $\det A_\omega$ has a zero of order one at $\omega = 0$.
This is why we need the following proposition:

\begin{proposition}
	\label{prop:determinant_non_zero_derivative}
	If \(\mathcal G\) is a metric tree
	\[
	\frac{\partial}{\partial\omega}_{\mid\omega = 0} \big[\det A_\omega (\mathcal G)\big]
	=
	- \sum_{\me \in \mE} \ell_\me
	.
	\]
	In particular, the map $\omega \mapsto \det A_\omega(\mathcal G)$ has a zero of order one at $\omega = 0$.
\end{proposition}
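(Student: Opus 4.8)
The plan is to extract the first-order term of $\det A_\omega(\mathcal G)$ at $\omega=0$ via Jacobi's formula $\partial_\omega\det A_\omega = \Tr\big(\operatorname{adj}(A_\omega)\,\partial_\omega A_\omega\big)$, using that by Lemma~\ref{lem:kernel} the matrix $A_0$ satisfies $\det A_0=0$ and has a one-dimensional kernel. Since $A_0$ has corank one, its adjugate has rank one, so
\[
\operatorname{adj}(A_0)\;=\;c(\mathcal G)\,x_0\,v^{\top},
\]
where $x_0$ spans $\ker A_0$ (given explicitly in Lemma~\ref{lem:kernel}), $v$ spans the left kernel $\ker A_0^{\top}$, and $c(\mathcal G)$ is a scalar. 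A short calculation from the explicit shape of $A_0$ identifies $v$: it has entry $1$ in each of the $\lvert\mV\rvert$ vertex-condition rows, entry $0$ in the first of the two consistency rows of every edge, and entry $-1$ in the second. On the other side, $\partial_\omega A_\omega|_{\omega=0}$ is sparse: the vertex-condition rows are $\omega$-independent, and among the consistency entries only the $\sin(\ell_\me\omega)$-terms have non-vanishing derivative at $0$, contributing $\ell_\me$ in an outward-derivative column (in the first consistency row of $\me$) and $\ell_\me$ in a vertex-value column (in the second). Hence $\partial_\omega A_\omega|_0\,x_0$ has entry $\ell_\me$ in the second consistency row of each edge $\me$ and vanishes elsewhere, and pairing with $v$ gives $v^{\top}\big(\partial_\omega A_\omega|_0\big)x_0=-\sum_{\me\in\mE}\ell_\me$. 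Therefore
\[
\frac{\partial}{\partial\omega}\Big|_{\omega=0}\det A_\omega(\mathcal G)\;=\;-\,c(\mathcal G)\sum_{\me\in\mE}\ell_\me .
\]

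It remains to show $c(\mathcal G)=1$. A quick observation already yields the weaker statement that the zero at $0$ is simple: $c(\mathcal G)$ is the entry of $\operatorname{adj}(A_0)$ in the column of $\psi(\mv)$ and the row of the vertex condition at a leaf $\mv$ (both $x_0$ and $v$ have entry $1$ there), and deleting that row and column from $A_0$ yields precisely the matrix obtained from $A_0(\mathcal G,\mv)$ by discarding its trivial Dirichlet row and column; thus $c(\mathcal G)=\det A_0(\mathcal G,\mv)$, which is non-zero by Proposition~\ref{prop:ker_A_0} (a tree with one Dirichlet vertex has $\dim\ker A_0=0$). For the exact value I would induct on $\lvert\mE\rvert$. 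If $\mathcal G$ is a single edge with standard (hence Neumann) conditions at its two endpoints, an explicit $4\times4$ evaluation gives $\det A_\omega=-\sin(\ell_\me\omega)$, so $c(\mathcal G)=1$. For the step, pick a leaf $\mw$ with incident edge $\mf=\mathsf u\mw$ and set $\mathcal G'=\mathcal G\setminus\{\mw,\mf\}$, again a tree with standard conditions, with edge set $\mE'$. Expanding $\det A_\omega(\mathcal G)$ along the rows and columns attached to $\mw$ — by the same determinant-preserving row and column operations (a sequence of Laplace expansions along rows and columns with a single nonzero entry) as in the proof that $\det A_\omega$ is well defined, which amounts to a Schur-complement reduction turning the pendant edge into an effective $\delta$-type entry $-\tan(\ell_\mf\omega)$ at $\mathsf u$ — yields the identity of analytic functions
\[
\det A_\omega(\mathcal G)\;=\;\cos(\ell_\mf\omega)\,\det A_\omega(\mathcal G')\;-\;\sin(\ell_\mf\omega)\,\operatorname{adj}\bigl(A_\omega(\mathcal G')\bigr)_{\psi(\mathsf u),\,\mathsf u},
\]
where the last subscript denotes the entry in the column of $\psi(\mathsf u)$ and the row of the vertex condition at $\mathsf u$. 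Comparing the coefficients of $\omega$ on both sides, using that this adjugate entry equals $c(\mathcal G')$ at $\omega=0$ and that $\partial_\omega\det A_\omega(\mathcal G')|_0=-c(\mathcal G')\sum_{\me\in\mE'}\ell_\me$ by the inductive hypothesis, everything cancels except $c(\mathcal G)\sum_{\me\in\mE}\ell_\me=c(\mathcal G')\sum_{\me\in\mE}\ell_\me$; since $\sum_\me\ell_\me>0$, this gives $c(\mathcal G)=c(\mathcal G')$ and closes the induction.

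I expect the only real obstacle to be establishing the leaf-peeling identity for $\det A_\omega(\mathcal G)$. Concretely, one moves the block belonging to $\mw$ — the value $\psi(\mw)$, the two outward derivatives on $\mf$, the vertex condition at $\mw$, and the two consistency rows of $\mf$ — into a corner and eliminates it; this is explicit but a little intricate, the exact analogue here of the manipulations in the well-definedness lemma. Sign bookkeeping (the $(-1)^{i+j}$ in the adjugate, and the edge-orientation conventions) likewise needs care but is harmless once a fixed ordering of vertices and edges is chosen, exactly as there. The remaining ingredients — the rank-one structure of $\operatorname{adj}(A_0)$, the identification of the left kernel, and the sparsity of $\partial_\omega A_\omega|_0$ — are routine.
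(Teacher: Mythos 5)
Your proposal is correct, and its first half takes a genuinely different route from the paper. The paper proves the proposition by a direct induction over $\lvert\mE\rvert$: it differentiates the edge-attachment formula \eqref{eq:add_edge_Kirchhoff} of Lemma~\ref{lem:determinant_formulas} at $\omega=0$ and inserts $\det A_0(\mathcal G,\mv)=1$ from Lemma~\ref{lem:formula_for_Neumann} (itself proved by the same leaf-peeling induction via \eqref{eq:add_edge_Neumann}). You instead start from Jacobi's formula and the rank-one structure $\operatorname{adj}(A_0)=c\,x_0v^{\top}$; your identification of the left kernel $v$ (entries $1$ on the vertex-condition rows, $0$ and $-1$ on the two consistency rows of each edge) checks out against the explicit form of $A_0$, as does the sparsity computation giving $v^{\top}\bigl(\partial_\omega A_\omega|_0\bigr)x_0=-\sum_{\me\in\mE}\ell_\me$. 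This buys you the simplicity of the zero essentially for free, since $c=\pm\det A_0(\mathcal G,\mv)\neq0$ follows from Proposition~\ref{prop:ker_A_0} without computing the constant. To pin down $c=1$ you then fall back on a leaf-peeling induction whose key identity is, up to relabelling and the substitution $\operatorname{adj}\bigl(A_\omega(\mathcal G')\bigr)_{\psi(\mathsf u),\mathsf u}=\det A_\omega(\mathcal G',\mathsf u)$, exactly the paper's \eqref{eq:add_edge_Kirchhoff}, so the two arguments reconverge there; in fact, once you have the identification $c(\mathcal G)=\det A_0(\mathcal G,\mv)$ with the correct sign, Lemma~\ref{lem:formula_for_Neumann} gives $c=1$ directly and your second induction is redundant. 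The only step needing genuine care is the sign in the adjugate/minor identification, which you correctly flag and which is settled by fixing the row and column ordering as in the paper's proof of Lemma~\ref{lem:determinant_formulas}.
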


The proof of Proposition~\ref{prop:determinant_non_zero_derivative} relies on the following two lemmas.
We believe that Lemma~\ref{lem:determinant_formulas} is interesting in its own right and a strength of our secular function since it establishes a connection between a secular function and certain graph surgeries.
Recall that $A_\omega(\mathcal{G}, \mv)$ denotes the secular function of the metric graph $\mathcal{G}$ with a Dirichlet condition at the vertex $\mv$ and standard conditions elsewhere.

\begin{lemma}
	\label{lem:determinant_formulas}
	Let $\mathcal G$ be a compact metric graph and $\mathcal G^+$ be the graph obtained by attaching a new edge $\me = \mv \mw$ between a vertex $\mv$ of $\mathcal G$  with standard conditions and a new vertex $\mw$.
	\begin{enumerate}[(i)]
	\item
	If we impose standard conditions at $\mw$, then
	\begin{equation}
	\label{eq:add_edge_Kirchhoff}
	\det A_\omega(\mathcal G^+,\mV_D)
	=
	-
	\sin(\ell_\me \omega) 
	\det A_\omega (\mathcal G , \mV_D\cup\{\mv\}) 
	+
	\cos(\ell_\me \omega) 
	\det A_\omega(\mathcal G,\mV_D)
	.
\end{equation}
	\item
	If we impose Dirichlet conditions at $\mw$, then 
	\begin{equation}
	\label{eq:add_edge_Neumann}
	\det A_\omega(\mathcal G^+ , \mV_D\cup\{\mw\})
	=
	\sin(\ell_\me \omega) \det A_\omega (\mathcal G,\mV_D) 
	+
	\cos(\ell_\me \omega) 
	\det A_\omega(\mathcal G , \mV_D\cup\{\mv\})
	\end{equation}
	\item If we impose Dirichlet vertex conditions at \(\mv\) and \(\mathcal G\setminus\{ \mv \}\) is disconnected
	\begin{equation}
	\label{eq:Dirichlet_disconnected}
	\det A_\omega(\mathcal G , \mv)=\prod_{i=1}^n
	\det A_\omega(\mathcal G_i,  \mv)
	\end{equation}
	where \(\mathcal G_1,\ldots,\mathcal G_n\) denote the components of \(\mathcal G\setminus \{ \mv \}\) with a copy of the vertex $\mv$ added again to each component.
\end{enumerate}
\end{lemma}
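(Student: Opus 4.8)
The plan is to prove all three identities as pure linear‑algebra statements about the matrix $A_\omega$, reducing the determinant on the left to the one(s) on the right by determinant‑preserving operations — simultaneous permutations of lines and columns, adding a multiple of one line or column to another, and Laplace expansion along a line or column with a single non‑zero entry — in the same spirit as the proof that $\omega \mapsto \det A_\omega$ is well defined. By the preceding lemma we may fix the orientation of the new edge as $\me=\mv\mw$, and we are free to order the $|\mV|+2|\mE|$ coordinates of $\RR^{|\mV|+2|\mE|}$ as is most convenient for the bookkeeping.

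For (i) and (ii): passing from $\mathcal G$ to $\mathcal G^+$ introduces the vertex coordinate $x(\mw)$, the two half‑edge coordinates $x(\mv,\me)$ and $x(\mw,\me)$, the two consistency rows \eqref{eq:consistency-conditions-rescaled-rearranged} of $\me$ — call them $R_1$ (entries $1,-\cos(\ell_\me\omega),\sin(\ell_\me\omega),0$ in the columns $x(\mw),x(\mv),x(\mv,\me),x(\mw,\me)$) and $R_2$ (entries $0,\sin(\ell_\me\omega),\cos(\ell_\me\omega),1$) — and one new vertex‑condition row at $\mw$; moreover the Kirchhoff row at $\mv$ acquires exactly one extra entry, a $1$ in column $x(\mv,\me)$. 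In case (i) the $\mw$‑row has a single $1$ in column $x(\mw,\me)$ (Neumann type), in case (ii) a single $1$ in column $x(\mw)$ (Dirichlet); this difference is what will swap the roles of $\sin(\ell_\me\omega)$ and $\cos(\ell_\me\omega)$ in the two formulas. In case (i) one expands along column $x(\mw)$ (unique entry, in $R_1$), then subtracts the $\mw$‑row from $R_2$ and expands along column $x(\mw,\me)$ (now a unique entry, in the $\mw$‑row), which leaves the Laplace expansion along $R_2$ of a matrix whose only new column is $x(\mv,\me)$ and whose $\mv$‑row still carries the extra $1$ there. Deleting column $x(\mv,\me)$ together with $R_2$ returns $A_\omega(\mathcal G,\mV_D)$ verbatim (the extra entry in the $\mv$‑row sits in the deleted column); deleting column $x(\mv)$ together with $R_2$ and then expanding along the $\mv$‑row, whose only surviving entry is that extra $1$, returns $A_\omega(\mathcal G,\mV_D\cup\{\mv\})$, i.e.\ $A_\omega(\mathcal G,\mV_D)$ with the Kirchhoff row at $\mv$ replaced by the row $x(\mv)=0$. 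Hence $\det A_\omega(\mathcal G^+,\mV_D)$ equals $\pm\sin(\ell_\me\omega)\det A_\omega(\mathcal G,\mV_D\cup\{\mv\})\pm\cos(\ell_\me\omega)\det A_\omega(\mathcal G,\mV_D)$, and tracking the cofactor and permutation signs pins the coefficients to $-\sin(\ell_\me\omega)$ and $\cos(\ell_\me\omega)$. Case (ii) is identical except that one first expands along the $\mw$‑row (unique entry in column $x(\mw)$) and then along column $x(\mw,\me)$ (unique entry in $R_2$), so that it is $R_1$ rather than $R_2$ that survives to the final expansion; since $R_1$ carries $-\cos(\ell_\me\omega)$ and $\sin(\ell_\me\omega)$ where $R_2$ carried $\sin(\ell_\me\omega)$ and $\cos(\ell_\me\omega)$, this produces \eqref{eq:add_edge_Neumann}.

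For (iii): in $A_\omega(\mathcal G,\mv)$ the vertex condition at the cut vertex $\mv$ is the single row $x(\mv)=0$, which has one non‑zero entry, and expanding along it deletes that row and the column $x(\mv)$. Once $x(\mv)$ is removed, for every edge $\me$ incident to $\mv$ the coordinate $x(\mv,\me)$ occurs only in the two consistency rows of $\me$, so ordering the remaining coordinates component by component exhibits the reduced matrix as block diagonal, with one block per connected component $\mathcal G_1,\dots,\mathcal G_n$ of $\mathcal G\setminus\{\mv\}$. Reversing the first step inside the $i$‑th block — reinstating a trivial row $x(\mv)=0$ and column $x(\mv)$ for the copy of $\mv$ attached to $\mathcal G_i$ — identifies that block, up to sign, with $A_\omega(\mathcal G_i,\mv)$, and multiplicativity of the determinant over blocks yields \eqref{eq:Dirichlet_disconnected}; if $\mv$ has degree $\geq 2$ and one insists on the degree‑one convention for Dirichlet vertices, splitting it merely appends further rows and columns of the form $x(\mv_j)=0$, which change the determinant by nothing but an ordering sign.

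I expect the sign bookkeeping to be the only genuine difficulty: one must check that the accumulated factors $(-1)^{i+j}$ from the iterated Laplace expansions, together with the signs of the permutations used to bring each matrix into the displayed shape, combine to give exactly the leading minus sign in \eqref{eq:add_edge_Kirchhoff}, no sign in \eqref{eq:add_edge_Neumann}, and the plain product in \eqref{eq:Dirichlet_disconnected}. The cleanest way to make this mechanical is to fix, once and for all, a coordinate ordering in which $\mv$, $\mw$, the half‑edges of $\me$, and the component blocks occupy prescribed contiguous positions, so that every expansion is along the first or last remaining line and contributes a completely explicit sign. A minor secondary point is to record that ``$x(\mv)=0$'' is indeed the correct matrix encoding of a Dirichlet condition at $\mv$ in this formalism, which is immediate from Lemma~\ref{lem:isomorphism} applied to $\mathcal G$ with $\mv$ made Dirichlet.
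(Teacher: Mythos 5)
Your proposal is correct and follows essentially the same route as the paper: simultaneous row and column permutations, Laplace expansions along the lines attached to \(\mw\) and to the new edge, identification of the two surviving minors with \(\det A_\omega(\mathcal G,\mV_D)\) and \(\det A_\omega(\mathcal G,\mV_D\cup\{\mv\})\), and a reduction of (iii) to a block-diagonal determinant. One small correction in (i): after deleting the column \(x(\mv)\) and the row \(R_2\), the \(\mv\)-row still carries a \(1\) for \emph{every} edge of \(\mathcal G^+\) incident to \(\mv\), so the last expansion must be along the \emph{column} \(x(\mv,\me)\) (whose unique nonzero entry is that extra \(1\) sitting in the \(\mv\)-row), not along the \(\mv\)-row itself; the resulting minor is then the one you describe, namely \(A_\omega(\mathcal G,\mV_D\cup\{\mv\})\) with its Dirichlet row at \(\mv\) already expanded.
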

\begin{proof}
Simultaneous permutations of rows and columns do not change the determinant, so we may assume that the first four rows and columns of the matrix correspond to $ (\mw,\mv,\partial_\me \mw/\omega, \partial_\me \mv/\omega)$.
We calculate
\begin{align*}
	\det 
	A_\omega(\mathcal G^+)
	&=
	\det
	\begin{blockarray}{cccccc}
		& \mw & \mv & \partial_\me \mv / \omega & \partial_\me \mw / \omega & \text{other}\\
		\begin{block}{c(ccccc)}
  		\mw &	0 & 0 & 0 & 1 & 0  \\
 		\mv &	 0 & 0 & 1 & 0 & \ast \\
 		\partial_\me \mv/\omega & 1 & - \cos(\ell_\me \omega) & \sin(\ell_\me \omega) & 0 & 0  \\
 		\partial_\me \mw/\omega & 0 & \sin(\ell_\me \omega) & \cos(\ell_\me \omega) & 1 & 0  \\
 		\text{other} &	 0 & \ast & 0 & 0 & \ast \\
		\end{block}
	\end{blockarray}
	\\
	&=
	\det
	\begin{blockarray}{cccc}
		 & \mv & \partial_\me \mv/\omega & \text{other}\\
		\begin{block}{c(ccc)}
   		\mv & 0 & 1 & \ast \\
 		\partial_\me \mw / \omega & \sin(\ell_\me \omega) & \cos(\ell_\me \omega) & 0  \\
 		\text{other} & \ast & 0 & \ast \\
		\end{block}
	\end{blockarray}\\
	&=
	-
	\sin (\ell_\me \omega)
	\det
	\underbrace{\begin{pmatrix}
		1 & \ast \\
		0 & \ast \\
	\end{pmatrix}
	}_{= \det A_\omega(\mathcal G, \mv )}
	+
	\cos(\ell_\me \omega)
	\det A_\omega(\mathcal G)
\end{align*}
where we developed the determinant along the $\mw$ row in the first step and along the $\partial_\me \mw / \omega$ row in the second step.
This shows~\eqref{eq:add_edge_Kirchhoff}.
For~\eqref{eq:add_edge_Neumann}, we calculate
\begin{align*}
	\det 
	A_\omega(\mathcal G^+, \mw)
	&=
	\det
	\begin{blockarray}{cccccc}
		& \mw & \mv & \partial_\me \mv / \omega & \partial_\me \mw  / \omega& \text{other}\\
		\begin{block}{c(ccccc)}
  		\mw &	1 & 0 & 0 & 0 & 0  \\
 		\mv &	 0 & 0 & 1 & 0 & \ast \\
 		\partial_\me \mv / \omega & 1 & - \cos(\ell_\me \omega) & \sin(\ell_\me \omega) & 0 & 0  \\
 		\partial_\me \mw / \omega & 0 & \sin(\ell_\me \omega) & \cos(\ell_\me \omega) & 1 & 0  \\
 		\text{other} &	 0 & \ast & 0 & 0 & \ast \\
		\end{block}
	\end{blockarray}
	\\
	&=
	\det
	\begin{blockarray}{cccc}
		 & \mv & \partial_\me/\omega \mw & \text{other}\\
		\begin{block}{c(ccc)}
   		\mv & 0 & 1 & \ast \\
 		\partial_\me \mw/\omega & - \cos(\ell_\me \omega) & \sin(\ell_\me \omega) & 0  \\
 		\text{other} & \ast & 0 & \ast \\
		\end{block}
	\end{blockarray}\\
	&=
	\cos(\ell_\me \omega) 
	\underbrace{
	\det
	\begin{pmatrix}
		1 & \ast \\
		0 & \ast \\
	\end{pmatrix}
	}_{= \det A_\omega(\mathcal G , \mv)}
	+
	\sin (\ell_\me \omega)
	\det A_\omega(\mathcal G).
\end{align*}

Finally, \eqref{eq:Dirichlet_disconnected} follows by similar manipulations using again that simultaneous permutations of rows and columns do not affect the determinant
\begin{align*}
	\det
	\begin{blockarray}{ccc}
		 & \mv & \text{other}\\
		\begin{block}{c(cc)}
   		\mv & 1 & 0 \\
 		\text{other} & \ast & \ast \\
		\end{block}
	\end{blockarray}
	&=
	\det
	\begin{blockarray}{ccc}
		 & \mv & \text{other}\\
		\begin{block}{c(cc)}
   		\mv & 1 & 0 \\
 		\text{other} & 0 & \ast \\
		\end{block}
	\end{blockarray}
	=
	\det
	\begin{blockarray}{cccccc}
		& \mv_1 & \mv_2 & \dots & \mv_n & \text{other}\\
		\begin{block}{c(ccccc)}
  		\mv_1 & 1 &   &   & 0 & 0  \\
 		\mv_2 &   & 1 &   &  & 0 \\
 		    &   &   & 1 & \dots & 0  \\
 		\mv_n &   &   &   & 1 & 0 \\
 		\text{other} & 0 & 0 & 0 & 0 & \ast \\
		\end{block}
	\end{blockarray}
	\\
	&=
	\det
	\begin{pmatrix}
		A_\omega (\mathcal G_1, \mv ) &\rvline&  0 &\rvline& 0 \\
		\hline
		 0  &\rvline&  \dots &\rvline& 0 \\	
		\hline
		0   &\rvline&  0 &\rvline& A_\omega (\mathcal G_n , \mv) \\			 
	\end{pmatrix}
	=
	\prod_{i=1}^n
	\det A_\omega(\mathcal G_i, \mv).
	\qedhere
\end{align*}

\end{proof}

\begin{lemma}
	\label{lem:formula_for_Neumann}
	Let $\mathcal G$ be a tree with Dirichlet boundary conditions at exactly one leaf $\mv$ and standard conditions elsewhere.
	Then
\begin{equation}
	\label{eq:Dirichlet_determinant_at_0}
	\det A_0 (\mathcal G, \mv)
	=
	1
	.
\end{equation}
\end{lemma}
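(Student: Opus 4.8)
I would establish \eqref{eq:Dirichlet_determinant_at_0} by induction on the number of edges $\#\mE$ of $\mathcal G$, with Lemma~\ref{lem:determinant_formulas}(i) carrying the inductive step.

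For the base case $\#\mE=1$ the graph $\mathcal G$ is a single edge $\me=\mv\mw$ with a Dirichlet condition at $\mv$ and standard conditions at the leaf $\mw$. Ordering the coordinates as $(\mv,\mw,\partial_\me\mv/\omega,\partial_\me\mw/\omega)$ and using that at $\omega=0$ the two consistency rows from \eqref{eq:consistency-conditions-rescaled-rearranged} become $(-1,1,0,0)$ and $(0,0,1,1)$, the matrix reads
\[
A_0(\mathcal G,\mv)
=
\begin{pmatrix}
1 & 0 & 0 & 0\\
0 & 0 & 0 & 1\\
-1 & 1 & 0 & 0\\
0 & 0 & 1 & 1
\end{pmatrix},
\]
and a one-step cofactor expansion along the first row gives $\det A_0(\mathcal G,\mv)=1$.

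For the inductive step with $\#\mE\ge 2$, I would use that a tree with at least two edges has at least two leaves to choose a leaf $\mw\neq\mv$, and let $\mf=\mathsf u\mw$ be its unique incident edge. Since $\mathcal G$ has more than one edge, the neighbour satisfies $\mathsf u\neq\mv$, so $\mathsf u$ carries standard conditions; deleting $\mw$ and $\mf$ thus yields a strictly smaller tree $\mathcal G'$ still having exactly one Dirichlet vertex, the leaf $\mv$, and standard conditions at all other vertices (if $\mathsf u$ had degree $2$ it merely becomes a standard degree-$1$ vertex, which is admissible). As $\mathcal G$ is recovered from $\mathcal G'$ by attaching the edge $\mf$ at the standard vertex $\mathsf u$ with standard conditions at the new vertex $\mw$, Lemma~\ref{lem:determinant_formulas}(i) yields
\[
\det A_\omega(\mathcal G,\mv)
=
-\sin(\ell_\mf\omega)\,\det A_\omega(\mathcal G',\{\mv,\mathsf u\})
+\cos(\ell_\mf\omega)\,\det A_\omega(\mathcal G',\mv),
\]
and setting $\omega=0$ collapses the right-hand side to $\det A_0(\mathcal G',\mv)$, which equals $1$ by the inductive hypothesis.

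I expect no genuine obstacle here; the only point needing care is the bookkeeping in the inductive step, namely verifying that the removed leaf $\mw$ and its neighbour $\mathsf u$ are both distinct from $\mv$, so that $\mathcal G'$ again satisfies the hypotheses of the lemma (a single Dirichlet leaf, standard conditions elsewhere) — this is exactly where $\#\mE\ge 2$ enters. Equivalently, one may run the induction in the forward direction, building $\mathcal G$ from the single-edge base graph by successively attaching standard edges toward the leaves: each such attachment multiplies the value at $\omega=0$ by $\cos 0=1$ via Lemma~\ref{lem:determinant_formulas}(i), so the value remains $1$ throughout.
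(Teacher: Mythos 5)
Your proof is correct, and while it shares the paper's overall strategy (induction on $\#\mE$ driven by the surgery formulas of Lemma~\ref{lem:determinant_formulas}), the inductive step is organized around a genuinely different decomposition. The paper grows the tree by attaching the \emph{Dirichlet} pendant edge last and applies part~(ii); the price is that the Dirichlet vertex of the smaller tree may then be an interior vertex, so the induction hypothesis (stated for a Dirichlet \emph{leaf}) is not directly applicable and one must first split the smaller tree at that vertex via the product formula~(iii), applying the hypothesis to each resulting component. You instead prune a \emph{standard} leaf $\mw\neq\mv$ and apply part~(i); since $\mathsf u\neq\mv$ (as you correctly argue from $\#\mE\ge 2$ and the degree-one normalization of Dirichlet vertices), the vertex $\mv$ remains a Dirichlet leaf of the smaller tree, the hypotheses of the lemma are preserved verbatim, and part~(iii) is never needed. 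The term carrying the extra Dirichlet condition at $\mathsf u$ is killed by $\sin(0)=0$ exactly as in the paper, so both recursions collapse to the same identity $\det A_0(\mathcal G,\mv)=\det A_0(\mathcal G',\mv)$. Your route is slightly cleaner as a self-contained proof of the stated lemma; the paper's route has the side benefit of rehearsing the combination of (ii) and (iii) that is needed anyway in the subsequent proof of Proposition~\ref{prop:determinant_non_zero_derivative}, where the Dirichlet vertex of the sub-tree is genuinely interior. Your base-case computation (determinant $1$ at $\omega=0$, versus the paper's $\cos(\omega\ell_\me)$ for general $\omega$) is a correct specialization.
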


\begin{proof}
	We proceed by induction over $\lvert \mE \rvert$.
\\
If $\lvert \mE \rvert = 1$, then there are exactly two leaves and
\[
	\det A_\omega
	=
	\det
	\begin{pmatrix}
	1 & 0 & 0 & 0\\
	0 & 0 & 1 & 0\\
	1 & - \cos(\omega \ell_\me) & \sin(\omega \ell_\me) & 0 \\
	0 & \sin(\omega \ell_\me) & \cos(\omega \ell_\me) & 1 \\
	\end{pmatrix}
	=
	\cos(\omega \ell_\me)
	.
\]
For the induction step, let us add to a tree $G$ a new edge $\mv \mw$ with Dirichlet boundary conditions at the leaf $\mw$ and call the new tree $G^+$.
Then, by~\eqref{eq:add_edge_Neumann}
\begin{align*}
	\det A_0 (\mathcal G^+, \mw)
	&=
	\sin(0) \det A_0(\mathcal G)
	+
	\cos(0)
	\underbrace{\prod_i \det A_0(\mathcal G_i, \mv)}_{= 1}
	=
	1.
	\qedhere
\end{align*}
\end{proof}

We are now ready for the proof of Proposition~\ref{prop:determinant_non_zero_derivative}.

\begin{proof}[{Proof of Proposition~\ref{prop:determinant_non_zero_derivative}}]

We proceed by induction over $\lvert \mE \rvert$.
\\
If $\lvert \mE \rvert = 1$, then
\[
	\det A_\omega
	=
	\begin{pmatrix}
	0 & 0 & 0 & 1\\
	0 & 0 & 1 & 0\\
	1 & - \cos(\omega \ell_\me) & \sin(\omega \ell_\me) & 0 \\
	0 & \sin(\omega \ell_\me) & \cos(\omega \ell_\me) & 1 \\
	\end{pmatrix}
	=
	-\sin( \omega \ell_\me)
\]
which has derivative
\[
	\frac{\partial}{\partial\omega}
	\det A_\omega \mid_{\omega = 0}
	=
	-\ell_\me 
	\cos ( \omega \ell_\me ) \mid_{\omega = 0}
	=
	- \ell_\me .
\]
For the induction step, we call $\mathcal G^+$ the tree obtained by adding another edge $\tilde 
\me = \mv \mw$ with standard conditions at $\mv$ and $\mw$ to the graph $\mathcal G$ and find by~\eqref{eq:add_edge_Kirchhoff} and~\eqref{eq:Dirichlet_determinant_at_0}
\begin{align*}
	\frac{\partial}{\partial\omega}
	\left[
		\det A_\omega (\mathcal G^+)
	\right]_{\omega = 0}
	&=
	\frac{\partial}{\partial\omega}
	\left[
	- 
	\sin(\ell_{\tilde \me} \omega) 
	\det A_\omega (\mathcal G , \mv) 
	+
	\cos(\ell_{\tilde \me} \omega) 
	\det A_\omega(\mathcal G)
	\right]_{\omega = 0}
	\\
	&=
	- \ell_{\tilde \me}
	\underbrace{\det A_0 (\mathcal G, \mv )}_{=1}
	+
	\frac{\partial}{\partial\omega}
	\left[
		\det A_\omega(\mathcal G)
	\right]_{\omega = 0}
	\\
	&= 
	-\ell_{\tilde \me}
	-
	\sum_{\me \in \mE_\mathcal G} \ell_\me
	=
	-\sum_{\me \in \mE_{\mathcal G^+}} \ell_\me.
	\qedhere
\end{align*}
\end{proof}

From now on, the tree $\mathcal G$ is fixed and we use the  analytical properties of the secular function to prove Theorem~\ref{thm:main_tree}.
Denote by \(\lVert\cdot\rVert_{\mathrm{F}}\) the Frobenius norm on \(\RR^{\lvert \mV \rvert + 2 \lvert E \rvert\times\lvert \mV \rvert + 2 \lvert E \rvert}\) and let \(\lvert\cdot\rvert\) denote the Euclidean norm on \(\RR^{\lvert \mV \rvert + 2 \lvert \mE \rvert}\). The main tool in the proof of Theorem \ref{thm:main_tree} is the following

\begin{lemma}
	\label{lem:sequence}
	There exists a sequence $(\omega_n)_{n \in \NN}$ with $\det A_{\omega_n} = 0$ for all \(n\in\NN\), such that $\omega_n \to \infty$ and $A_{\omega_n} \rightarrow A_0$ as \(n\rightarrow \infty\).
\end{lemma}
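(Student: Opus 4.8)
The plan is to exploit that $A_\omega$ depends on $\omega$ only through the vector $\theta(\omega):=(\ell_\me\omega\bmod 2\pi)_{\me\in\mE}\in\TT^{|\mE|}$: the vertex-condition rows of $A_\omega$ in~\eqref{eq:definition-A-omega} carry no $\omega$, and the consistency-condition rows involve only $\cos(\ell_\me\omega)$ and $\sin(\ell_\me\omega)$. Hence $\det A_\omega=F(\theta(\omega))$ for a continuous (indeed trigonometric-polynomial) function $F\colon\TT^{|\mE|}\to\RR$, the map $\theta$ is a one-parameter subgroup so that $\theta(\omega+s)=\theta(\omega)+\theta(s)$ with $\dist_{\TT^{|\mE|}}(\theta(s),0)\le |s|\,(\sum_\me\ell_\me^2)^{1/2}$, and $\theta\mapsto A(\theta)$ is Lipschitz, so it suffices to drive $\theta(\omega_n)\to 0$ to obtain $A_{\omega_n}\to A_0$ in Frobenius norm. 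By Lemma~\ref{lem:kernel} we have $F(0)=\det A_0=0$, and by Proposition~\ref{prop:determinant_non_zero_derivative} the map $s\mapsto\det A_s=F(\theta(s))$ has derivative $-\sum_\me\ell_\me\neq 0$ at $s=0$; so there is $\sigma_0>0$ with $\det A_s>0$ on $[-\sigma_0,0)$ and $\det A_s<0$ on $(0,\sigma_0]$ (the sign being irrelevant).

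The second ingredient is the standard fact (Dirichlet's simultaneous approximation theorem, or Weyl equidistribution) that there is a sequence $t_m\to\infty$ with $\ell_\me t_m\to 0\pmod{2\pi}$ for every $\me\in\mE$, i.e. $\theta(t_m)\to 0$ in $\TT^{|\mE|}$. The idea is then to transplant the sign change of $\det A_\cdot$ at $0$ into a short window around such a large $t_m$ and catch a genuine zero there by the intermediate value theorem. Concretely: fix $0<\sigma<\sigma_0$, put $c:=\min\{\det A_{-\sigma},-\det A_\sigma\}>0$, and use uniform continuity of $F$ to pick $\rho>0$ with $|F(\theta)-F(\theta')|<c$ whenever $\dist(\theta,\theta')<\rho$. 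Choosing $m$ with $\dist(\theta(t_m),0)<\rho$ and setting $\omega_0:=t_m$, the group law gives $\dist(\theta(\omega_0\pm\sigma),\theta(\pm\sigma))=\dist(\theta(\omega_0),0)<\rho$, hence $\det A_{\omega_0-\sigma}>\det A_{-\sigma}-c\ge 0$ and $\det A_{\omega_0+\sigma}<\det A_{\sigma}+c\le 0$; the intermediate value theorem applied to the continuous map $\omega\mapsto\det A_\omega$ produces $\omega_*\in(\omega_0-\sigma,\omega_0+\sigma)$ with $\det A_{\omega_*}=0$, and since $|\omega_*-\omega_0|<\sigma$ also $\dist(\theta(\omega_*),0)\le\dist(\theta(\omega_0),0)+\sigma\,(\sum_\me\ell_\me^2)^{1/2}$.

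Finally I would iterate with $\sigma=\sigma_n\downarrow 0$, at the $n$-th stage choosing the index $m=m(n)$ large enough that simultaneously $t_{m(n)}\ge n$ and $\dist(\theta(t_{m(n)}),0)<\min(\rho_n,1/n)$, where $\rho_n$ is the $\rho$ above produced from $\sigma_n$; this is possible because $t_m\to\infty$ and $\theta(t_m)\to 0$. The resulting $\omega_n:=\omega_*$ then satisfies $\det A_{\omega_n}=0$, $\omega_n\ge t_{m(n)}-\sigma_n\to\infty$, and $\dist(\theta(\omega_n),0)\le 1/n+\sigma_n(\sum_\me\ell_\me^2)^{1/2}\to 0$, whence $A_{\omega_n}\to A_0$.

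The equidistribution input is routine; the delicate point — and the reason Proposition~\ref{prop:determinant_non_zero_derivative} is genuinely needed rather than merely the vanishing $\det A_0=0$ — is that the zero we manufacture must lie in a \emph{controllably short} interval around $\omega_0$, short enough that $\theta(\cdot)$, and hence $A_\cdot$, stays near $0$ throughout it. A quantitative first-order sign change of $\det A_s$ across $s=0$ is exactly what lets this localization be carried out with $\sigma$ as small as we please; on a graph where $\det A_\omega$ vanishes to higher order at $0$ (e.g. a non-tree with only standard conditions, cf. Remark~\ref{rem:why_only_trees}) this intermediate-value argument would break down, consistent with the scope of the result.
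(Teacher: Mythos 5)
Your proof is correct. It shares the paper's overall skeleton --- use recurrence of the trigonometric data $(\ell_\me\omega \bmod 2\pi)_\me$ to find arbitrarily large $\omega_0$ where $A_{\omega_0}$ nearly returns to $A_0$, then exploit the nondegenerate zero of $\det A_\omega$ at $\omega=0$ (Proposition~\ref{prop:determinant_non_zero_derivative}) to localize a genuine zero nearby --- but the localization mechanism is genuinely different. The paper controls $F(\omega_0)$ \emph{and} $F'(\omega_0)-F'(0)$ by packaging them into a single nonnegative trigonometric polynomial $h$ (almost periodic, with $h(0)=0$) and then invokes the Taylor-remainder Lemma~\ref{lem:Taylor}, which needs a uniform bound on a higher derivative. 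You instead evaluate $F$ only at the two endpoints $\omega_0\pm\sigma$, compare them to $\det A_{\mp\sigma}$ via the group law on $\TT^{|\mE|}$ and uniform continuity of $\det\circ A$ on the compact torus, and apply the bare intermediate value theorem to the transplanted sign change; no derivative of $F$ is ever estimated away from $0$. This is more elementary and arguably more transparent (and, as you note, still only requires a sign change at the reference frequency, so it would also cover the odd-order zeros arising in Lemma~\ref{lem:sign-change-at-ground-state}). What the paper's $h$-function formulation buys in exchange is that it survives perturbations $B_\omega=A_\omega+\omega^{-1}P_\omega$ (Proposition~\ref{lem:pertubation}), where the determinant is no longer a function on the torus and your uniform-continuity transplant would need an extra error term. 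Two small points of hygiene: your two-sided window uses $\det A_s$ for $s<0$, which is legitimate since the matrix \eqref{eq:definition-A-omega} and Proposition~\ref{prop:determinant_non_zero_derivative} make sense at $\omega=0$ as a two-sided derivative; and ``Weyl equidistribution'' is not quite the right citation when the $\ell_\me$ are rationally dependent --- the clean statement is the relative density of $\varepsilon$-almost-periods of the linear flow (Bohr), which is exactly the input the paper cites, or Dirichlet's simultaneous approximation applied to $\ell_\me/2\pi$.
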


\begin{proof}
	It suffices to show that for all \(C>0\) and all \(0<\varepsilon<1\) there exists some \(\hat\omega\geq C\), so that \(\det A_{\hat \omega}=0\) and \(\|A_{\hat\omega}-A_{0}\|_F\leq M \varepsilon\), where \(M>0\) is some constant that does not depend on \(\varepsilon\) or \(C\). From this the desired sequence may be constructed recursively.\\
	We consider the functions \(F,h:\mathbb R\rightarrow \mathbb R\) with \(F(\omega):=\det A_\omega\) and
		\[h(\omega) := F(\omega)^2 + (F'(\omega) - F'(0))^2 + \|A_\omega-A_0\|_F^2\]
	for \(\omega\in\mathbb R\).	 
	They are clearly analytic and by Lemma~\ref{lem:Taylor}, there exists $\delta > 0$ such that, if 
	\begin{equation}
	\label{eq:maximum_fs_bounded}
	\max
	\left\{
		\lvert F(\omega) \rvert, 
		\lvert F'(\omega) - F'(0) \rvert 
	\right\}
	\leq 
	\delta,
	\end{equation}
	then $F$ must have a zero in an $\epsilon^2$-neighbourhood of $\omega$.
	Moreover, the function $h$ is a nonnegative trigonometric polynomial and thus almost periodic with $h(0) = 0$, see for instance~\cite{Bohr-1925} for a reference. Thus, there exist some \(\omega_0\geq C+\varepsilon^2\) with
	\begin{equation}\label{eq:estimate-almostperiod-applied}
	h(\omega)
	\leq
	\min
	\left\{
	\delta^2,
	\epsilon^2
	\right\}
	\end{equation}
	In particular, \(\omega_0\) satisfies \eqref{eq:maximum_fs_bounded}, which in turn yields that there is some \(\hat\omega\in [\omega_0-\varepsilon^2,\omega_0+\varepsilon^2]\) with $F(\hat\omega) = \det A_{\hat\omega} = 0$. Using \(|\hat\omega-\omega_0|\leq \varepsilon^2\) and \eqref{eq:estimate-almostperiod-applied} we estimate
	\begin{align*}
		\|A_{\hat\omega}-A_0\|_F^2  \leq 2(\|A_{\hat\omega}-A_{\omega_0}\|_F^2 +\|A_{\omega_0}-A_0\|_F^2) \leq 2\left(\sup_{\omega\in\mathbb R}\Big|\frac{\partial}{\partial \omega}\|A_{\omega}\|_F^2\Big|+1\right)\varepsilon^2.
	\end{align*}
	Setting \(M:=\sqrt{2\left(\sup_{\omega\in\mathbb R}\big|\frac{\partial}{\partial \omega}\|A_{\omega}\|_F^2\big|+1\right)}\) proves the claimed statement.
	\end{proof}
\begin{proof}[Proof of Theorem~\ref{thm:main_tree}]
	Take the sequence $(\omega_n)_{n \in \NN}$ from Lemma~\ref{lem:sequence}. Since \(\det A_{\omega_n}=0\) holds for all \(n\in\mathbb N\), there exists a sequence \((x_n)_{n\in\mathbb N}\) with $x_n \in \ker A_{\omega_n}$ and \(\lvert x_n\rvert=1\) for all \(n\in\mathbb N\). By compactness, we may assume -- after passing to a subsequence -- that $(x_n)_{n\in\mathbb N}$ converges to some $x_\infty$ with \(\lvert x_\infty\rvert=1\). Note that, by choice of \((\omega_n)_{n\in\mathbb N}\), the sequence \((A_{\omega_n})_{n\in\mathbb N}\) converges to \(A_0\). We obtain
		\[A_0x_\infty=\lim_{n\rightarrow\infty}A_{\omega_n}x_n=0.\]
	Now, since \(x_\infty\) is a normalized element of the kernel of \(A_0\), Lemma \ref{lem:kernel} yields
		\[x_\infty=\pm \frac{1}{\sqrt{\lvert V \rvert}}
		\begin{pmatrix}
		1\\
		\dots
		\\
		1\\
		0\\
		\dots
		\\
		0\\			
		\end{pmatrix}
		.
	\]
	Therefore, for sufficiently large \(n\in\mathbb N\), the first \(| \mV |\) entries of \(x_n\) must differ from \(0\). This, in turn, means that for sufficiently large \(n\) the eigenfunction \(\psi_n\) of \(\Delta\) that corresponds to \(x_n\) in the sense of Lemma \ref{lem:isomorphism} does not vanish in any of the vertices of \(\mathcal G\) and is thus supported on the whole graph \(\mathcal G\). This completes the proof.
\end{proof}
\section{Metric graphs with at least one Dirichlet vertex}
\label{sec:Dirichlet}
In this section, we prove Theorem~\ref{thm:main_Dirichlet} for graphs with at least one Dirichlet vertex. The proof relies on the fact that the eigenfunctions corresponding lowest eigenvalue of the Laplacian does not vanish on \(\mathcal G \setminus\mV_D\) if \(\mathcal G\setminus\mV_D\) is connected (see \cite{Kurasov-19}). Moreover it uses the following

\begin{lemma}\label{lem:sign-change-at-ground-state}
	Suppose that \(\mV_D\) is nonempty and \(\mathcal G\setminus\mV_D\) is connected. Then, $\omega \mapsto \det A_\omega$ changes sign in every neighbourhood of $\omega_0$ where $\omega_0^2 > 0$ is the ground state eigenvalue.
\end{lemma}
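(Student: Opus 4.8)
The plan is to exploit the correspondence from Lemma~\ref{lem:isomorphism} together with the fact, recalled before the lemma, that the ground-state eigenfunction $\psi_0$ does not vanish on $\mathcal G\setminus\mV_D$. First I would observe that since $\omega_0^2$ is the \emph{lowest} eigenvalue, the secular function $F(\omega):=\det A_\omega$ has no zero on $(0,\omega_0)$, so $F$ has a fixed sign there; thus "changing sign in every neighbourhood of $\omega_0$" is equivalent to saying that $F$ is nonzero on a punctured neighbourhood of $\omega_0$ and takes opposite signs immediately to the left and immediately to the right of $\omega_0$, i.e. $\omega_0$ is a zero of \emph{odd} order. So the whole lemma reduces to the claim that $F$ has a zero of odd order at $\omega_0$.

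Next I would argue that this zero has order exactly one. Because $\mathcal G\setminus\mV_D$ is connected and $\mV_D\neq\emptyset$, the ground-state eigenvalue is simple (this is the same Perron--Frobenius-type statement from \cite{Kurasov-19} that gives non-vanishing of $\psi_0$), so by Lemma~\ref{lem:isomorphism} the kernel of $A_{\omega_0}$ is one-dimensional. The remaining point — and this is the one genuine obstacle, the same subtlety flagged in the paragraph before Proposition~\ref{prop:determinant_non_zero_derivative} — is that a one-dimensional kernel only forces the order of the zero to be \emph{at least} one, not exactly one. To pin it down I would compute $F'(\omega_0)$ directly and show it is nonzero, in the spirit of Proposition~\ref{prop:determinant_non_zero_derivative}: expand $\det A_\omega$ along the row encoding the Dirichlet condition and peel off edges using the surgery identities \eqref{eq:add_edge_Kirchhoff}, \eqref{eq:add_edge_Neumann}, \eqref{eq:Dirichlet_disconnected} from Lemma~\ref{lem:determinant_formulas}, reducing to a base case on a short path where the derivative can be read off explicitly from $\pm\sin(\ell\omega)$- and $\pm\cos(\ell\omega)$-type entries. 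Since all the $\sin$ and $\cos$ factors appearing are evaluated at $\omega_0>0$ and the graph is connected, one checks the resulting expression for $F'(\omega_0)$ cannot vanish.

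Alternatively — and this may be cleaner to write — I would avoid the derivative computation and instead use an analytic/topological argument: the map $\psi\mapsto x(\psi)$ and the known analyticity of $F$ together with the fact that, as $\omega$ increases through $\omega_0$, the eigenvalue count jumps by exactly $1$ (simplicity of the ground state) forces the number of zeros of $F$, counted with multiplicity, to jump by exactly $1$ there, hence the order is $1$ and in particular odd. Either way, once odd order is established, $F$ is strictly positive on one side of $\omega_0$ and strictly negative on the other within a small interval, which is the assertion. The main obstacle is exactly the gap between "dimension of kernel" and "order of zero"; I expect the edge-peeling computation of $F'(\omega_0)$, using Lemma~\ref{lem:determinant_formulas}, to be the way the authors close it, mirroring their proof of Proposition~\ref{prop:determinant_non_zero_derivative} but now at a positive frequency $\omega_0$ instead of at $0$.
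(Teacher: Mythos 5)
Your reduction of the lemma to the claim that $\omega_0$ is an odd-order zero of $F(\omega)=\det A_\omega$ is correct, and you have put your finger on exactly the right obstacle: simplicity of the ground state gives $\dim\ker A_{\omega_0}=1$ via Lemma~\ref{lem:isomorphism}, but that only bounds the order of the zero from \emph{below}. Neither of your two routes actually closes this gap. The first ends with the assertion that, after peeling off edges with Lemma~\ref{lem:determinant_formulas}, ``one checks the resulting expression for $F'(\omega_0)$ cannot vanish''; no such check is supplied, and unlike the computation at $\omega=0$ in Proposition~\ref{prop:determinant_non_zero_derivative} --- where every Dirichlet subtree contributes exactly $1$ by Lemma~\ref{lem:formula_for_Neumann} and all summands $-\ell_\me$ carry the same sign --- at a positive frequency $\omega_0$ the surgery recursion produces a sum of terms of undetermined signs that could a priori cancel. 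The second route is circular: the claim that the number of zeros of $F$ counted \emph{with multiplicity} jumps by exactly one at $\omega_0$ is precisely the statement ``order of zero equals dimension of kernel'' that is in question; it is known for the classical secular function of \cite{BolteE-09}, but, as the discussion preceding Proposition~\ref{prop:determinant_non_zero_derivative} stresses, it is open for this real-valued $\det A_\omega$.

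The paper's proof sidesteps the order-one question entirely (and indeed concludes only odd order, which suffices). It embeds $F$ into the two-parameter function $(s,\omega)\mapsto\det A_\omega(\mathcal G_s,\mV_D)$ obtained by varying the length $s$ of the pendant Dirichlet edge $\me=\mv\mw$. A Hadamard-type formula shows the ground-state frequency $\omega(s)$ is strictly decreasing in $s$, so near $(\ell_\me,\omega_0)$ the zero locus is a monotone curve separating two regions on which the determinant is sign-definite. Writing $\det A_\omega(\mathcal G_s,\mV_D)=\cos(s\omega)X(\omega)+\sin(s\omega)Y(\omega)$ via \eqref{eq:add_edge_Neumann}, simultaneous vanishing of the function and of its $s$-derivative at $(\ell_\me,\omega_0)$ would force $(X(\omega_0),Y(\omega_0))=(0,0)$, contradicting $X(\omega_0)\neq 0$ (domain monotonicity of the ground state). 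The nonzero $s$-derivative then yields a sign change across the separating curve, which transfers to the $\omega$-direction. To salvage your approach you would need either to carry out the $F'(\omega_0)\neq 0$ computation in full or to prove the multiplicity statement for this secular function; both remain open in the paper.
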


\begin{proof}
	Let $\mv$ be a Dirichlet vertex , which we may always assume to have degree one, and which is connected to a vertex $\mw$ with standard conditions via the edge $\me$. For our proof it will be helpful to consider the dependence of the ground state eigenvalue of the length of the edge \(\me\). For that purpose let \(\mathcal G_{s}\) denote the metric graph with the same combinatorial structure as \(\mathcal G\) where the edge \(\me\) has the new length \(s>0\) and the other edges have the same length as in \(\mathcal G\). With this notation we have \(\mathcal G_{\ell_\me}=\mathcal G\).\\
	It is well-known that locally around \(\ell_\me\) there is a differentiable map \(s\mapsto \omega(s)\) with $\omega(\ell_\me) = \omega_0$, such that $\det A_\omega (\mathcal G_s,\mV_D) = 0$ if and only if $\omega = \omega(s)$. 
	Moreover, it follows from a Hadamard-type formula that said map is strictly decreasing in \(s\), see for instance \cite[Section 3.1.]{BerkolaikoK-12-book} or \cite{Friedlander-05} for reference. We conclude that the graph of $s\mapsto\omega(s)$ splits a neighbourhood of $(\ell_\me, \omega_0)$ into two disjoint connected domains on each of which $(s,\omega) \mapsto \det A_\omega(\mathcal G_s,\mV_D)$ is sign-definite.\\	
	Thus it sufficices to see that the partial derivative of $\det A_\omega(\mathcal G_s)$ in the \(s\)-direction is non-vanishing, because then we have found that $\det A_\omega(\mathcal G_s,\mV_D)$ also changes its signs in the \(\omega\)-direction as stated.
	
	For this purpose, note that Lemma~\ref{lem:determinant_formulas} (ii) yields
	\begin{align*}
	\det A_\omega(\mathcal{G}_s,\mV_D)
	&=
	\cos (s \omega)
	\det A_\omega(\mathcal{G}^-, (\mV_D\setminus\{\mv\})\cup\mw)
	+
	\sin(s \omega)
	\det A_\omega(\mathcal{G}^-,\mV_D\setminus\{\mv\})
	\\
	&=
	\cos (s \omega) X(\omega) + \sin (s \omega) Y(\omega)
	\end{align*}
	where $\mathcal{G}^-$ stands for the graph with the edge $\me$ removed.
	This expression is $0$ at $\omega_0$ by assumption, but we know that $X(\omega_0) \neq 0$, since the ground state (the first positive zero) of the smaller graph must strictly above the ground state of the larger graph.
	Now,
	\[
	\frac{\partial}{\partial{s}}
	\det A_\omega(\mathcal{G}_s,\mV_D)
	=
	s
	\left[
		- \sin (s \omega)
		X(\omega)
		+
		\cos (s \omega)
		Y(\omega).
	\right]
	\]
	So, if this derivative were zero at \(s=\ell_\me\) and $\omega = \omega_0$, we would obtain
	\[
	\begin{pmatrix}
	X(\omega_0)
	\\
	Y(\omega_0)
	\end{pmatrix}
	\in
	\ker 
	\begin{pmatrix}
	\cos (\ell_\me \omega) & \sin (\ell_\me \omega) \\
	-\sin (\ell_\me \omega) & \cos (\ell_\me \omega) \\
	\end{pmatrix}
	=
	\left\{ \begin{pmatrix} 0 \\ 0 \end{pmatrix} \right\}
	\]
	contrary to $X(\omega_0) \neq 0$.
\end{proof}

The rest of the proof of Theorem~\ref{thm:main_Dirichlet} is analogous to the proof of Theorem~\ref{thm:main_tree} on trees with two slight modifications:
we first did not prove that $\omega_0$ is a zero of first order, but merely a zero of odd order of $\det A_\omega$. However, this is still covered by Lemma~\ref{lem:Taylor}. Second, we use the previously mentioned result from \cite{Kurasov-19} that states that the eigenfunctions corresponding to the lowest eigenvalue of the Laplacian on \(\mathcal G\) does not vanish on \(\mathcal G\setminus\{\mV_D\}\) which yields that a normalized vector in \(\ker A_{\omega_0}(\mathcal G,\mV_D)\) is non-zero on \(\mV\setminus\mV_D\).

\section{Metric graphs with \(\delta\)-couplings and edgewise constant potentials}
In this section we prove Theorems \ref{thm:main_delta} and \ref{thm:main_potential}. Throughout this section we suppose that \(\mathcal G\) is a tree or that \(\mathcal G\) has at least one Dirichlet vertex. 
If \(\mathcal G\) is a tree, we put \(\omega_0:=0\), if there is at least one Dirichlet vertex, let \(\omega_0>0\) be the frequency corresponding to the ground state of \(-\Delta_\mathcal G\).  
As in the previous sections let \(A_\omega\) for \(\omega\in\mathbb R\) denote the matrix defined in \eqref{eq:definition-A-omega}. The main tool in the proofs of Theorems \ref{thm:main_delta} and \ref{thm:main_potential} is the following pertubation lemma that generalizes Lemma \ref{lem:sequence}.
\begin{proposition}\label{lem:pertubation}
	Let \(a>0\) and, for each \(\omega\geq a\), let \(P_\omega\in\mathbb R^{|\mV|+2|\mE|\times|\mV|+2|\mE|}\) be a matrix such that the map \(\omega\mapsto P_\omega\) is bounded and infinitely differentiable on \([a,\infty)\) in every entry such that all of its derivatives are bounded on \([a,\infty)\). Then, for the perturbed matrices
		\[B_\omega:= A_\omega+\omega^{-1} P_\omega,\quad \omega\geq a,\]
	there exists an increasing sequence \((\omega_n)_{n\in\mathbb N}\) in \([a,\infty)\) with \(\omega_n\rightarrow \infty\), so that \(\det B_{\omega_n}=0\) for all \(n\in\mathbb N\) and \(B_{\omega_n}\rightarrow A_{\omega_0}\) as \(n\rightarrow\infty\).
\end{proposition}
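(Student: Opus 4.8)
The plan is to follow the architecture of the proof of Lemma~\ref{lem:sequence}, but to replace its Taylor/implicit-function step by a bare intermediate value argument. The guiding idea is that $\omega\mapsto\det A_\omega$ undergoes a \emph{sign change} at $\omega_0$ --- a fact supplied by Proposition~\ref{prop:determinant_non_zero_derivative} when $\mathcal G$ is a tree (there $\omega_0=0$ and the derivative $-\sum_{\me}\ell_\me$ is nonzero, so the zero is simple) and by Lemma~\ref{lem:sign-change-at-ground-state} when $\mathcal G$ has a Dirichlet vertex --- and that this sign change is robust enough to survive the decaying perturbation $B_\omega-A_\omega=\omega^{-1}P_\omega$ once we are far out along the $\omega$-axis.

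As in Lemma~\ref{lem:sequence}, I would first reduce the statement to the following one-step claim: for every $C>0$ and every $\varepsilon\in(0,1)$ there is some $\hat\omega\ge C$ with $\det B_{\hat\omega}=0$ and $\lVert B_{\hat\omega}-A_{\omega_0}\rVert_F\le M\varepsilon$, where $M$ depends only on $\mathcal G$ and on $\tilde C:=\sup_{\omega\ge a}\lVert P_\omega\rVert_F<\infty$, not on $C$ or $\varepsilon$; the sequence $(\omega_n)$ is then assembled recursively with $\varepsilon=1/n$ and starting points tending to infinity. To prepare for the one-step claim I would record three facts. First, $\omega\mapsto A_\omega$ is a trigonometric-polynomial-valued map, hence Lipschitz and Bohr almost periodic, and $F(\omega):=\det A_\omega$ is an ordinary real trigonometric polynomial. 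Second, since $\lVert B_\omega-A_\omega\rVert_F=\omega^{-1}\lVert P_\omega\rVert_F\le\tilde C/\omega$, all the matrices $A_\omega,B_\omega$ with $\omega\ge a$ lie in a fixed bounded set, on which $X\mapsto\det X$ is Lipschitz. Third, because $F$ is a nonzero analytic function that changes sign at $\omega_0$, there is $\rho_0>0$ with $F\ne0$ on $0<\lvert\omega-\omega_0\rvert\le\rho_0$, and for $0<\rho\le\rho_0$ the numbers $F(\omega_0-\rho)$ and $F(\omega_0+\rho)$ are nonzero and of opposite sign, so $m(\rho):=\min\{\lvert F(\omega_0-\rho)\rvert,\lvert F(\omega_0+\rho)\rvert\}>0$.

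For fixed $C$ and $\varepsilon$, the core of the argument would be a careful three-scale choice. First pick $\rho:=\min\{\rho_0,\varepsilon\}$; this fixes the robustness $m(\rho)$ of the sign change that has to be preserved. Then pick an almost-period tolerance $\delta'\in(0,\varepsilon]$ so small that the determinant's Lipschitz constant times $\delta'$ is less than $m(\rho)/3$. Then pick a starting frequency $C'>C+\rho_0+a$ so large that the Lipschitz constant times $\tilde C/C'$, and $\tilde C/C'$ itself, fall below the corresponding thresholds. By Bohr almost periodicity there is a $\delta'$-almost-period $\tau\ge C'$ of $\omega\mapsto A_\omega$; set $\omega_*:=\omega_0+\tau$, so that $\sup_{s\in\RR}\lVert A_{\omega_*+s}-A_{\omega_0+s}\rVert_F<\delta'$. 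Combining the two sources of error --- the $\omega^{-1}$-sized perturbation and the almost-periodic mismatch --- gives $\lvert\det B_{\omega_*+s}-F(\omega_0+s)\rvert<m(\rho)$ for all $\lvert s\rvert\le\rho$. Hence $\det B_{\omega_*-\rho}$ and $\det B_{\omega_*+\rho}$ inherit the opposite signs of $F(\omega_0-\rho)$ and $F(\omega_0+\rho)$, so by continuity of $\omega\mapsto\det B_\omega$ there is $\hat\omega\in(\omega_*-\rho,\omega_*+\rho)$, necessarily with $\hat\omega>C$ and $\hat\omega>a$, at which $\det B_{\hat\omega}=0$. Writing $\hat\omega=\omega_*+s_0$, I would then bound $\lVert B_{\hat\omega}-A_{\omega_0}\rVert_F$ by $\lVert B_{\hat\omega}-A_{\hat\omega}\rVert_F+\lVert A_{\omega_*+s_0}-A_{\omega_0+s_0}\rVert_F+\lVert A_{\omega_0+s_0}-A_{\omega_0}\rVert_F$, each summand being at most a fixed multiple of $\varepsilon$, which yields the claim with $M$ of the form $2+(\text{Lipschitz constant})$.

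The step I expect to be the main obstacle is this last one. Making $B_{\hat\omega}$ \emph{close to $A_{\omega_0}$} --- rather than merely vanishing somewhere far out --- forces $\hat\omega$ to lie in a window about $\omega_*$ of width comparable to $\varepsilon$, which is why $\rho$ (and thus $m(\rho)$) must be chosen before $\delta'$ and $C'$: one has to guarantee that neither the almost-periodic mismatch $\delta'$ nor the $\omega^{-1}$-size of $\omega^{-1}P_\omega$ exceeds the robustness $m(\rho)$ of the sign change, so that the intermediate value argument still delivers a zero inside that narrow window. Everything else --- the Lipschitz bound for the determinant, the uniform boundedness of $P_\omega$ and its derivatives, the almost periodicity of $\omega\mapsto A_\omega$, and the sign change of $\det A_\omega$ at $\omega_0$ from Proposition~\ref{prop:determinant_non_zero_derivative} and Lemma~\ref{lem:sign-change-at-ground-state} --- is either routine or already established.
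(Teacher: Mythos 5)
Your proposal is correct, and it takes a genuinely different route from the paper. The paper replaces the intermediate value argument by the quantitative Taylor lemma (Lemma~\ref{lem:Taylor}): it extracts the odd order $2k+1$ of the zero of $F=\det A_\argmt$ at $\omega_0$, packages the required smallness of $G=\det B_\argmt$ and of its first $2k+1$ derivatives into a single nonnegative trigonometric polynomial $h$, and uses almost periodicity of $h$ (together with the decay of all derivatives of $\omega^{-1}R(\omega)$) to find points where Lemma~\ref{lem:Taylor} applies and yields a zero of $G$ within $\varepsilon^2$. You instead transport only zeroth-order information: the opposite signs of $F(\omega_0\pm\rho)$, quantified by $m(\rho)>0$, survive both the Bohr almost-period mismatch $\delta'$ of the matrix map $\omega\mapsto A_\omega$ and the $O(\omega^{-1})$ perturbation, so the intermediate value theorem produces the zero of $\det B_\omega$ in the window $(\omega_*-\rho,\omega_*+\rho)$. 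Your order of quantifiers ($\rho$ before $\delta'$ before $C'$) is exactly what is needed, and the final triangle-inequality bound gives the same kind of constant $M$ as in the paper. What your approach buys: it is more elementary, and it visibly uses only continuity and boundedness of $P_\omega$ --- none of the derivative bounds assumed in the statement enter your argument, so you in fact prove a slightly stronger proposition. What the paper's approach buys: it reuses verbatim the machinery already set up for Lemma~\ref{lem:sequence}, and it localizes the zero near a single test point using only data at that point rather than at two flanking points. Both proofs rest on the same two pillars, namely almost periodicity and the sign change (odd-order vanishing) of $\det A_\omega$ at $\omega_0$ supplied by Proposition~\ref{prop:determinant_non_zero_derivative} and Lemma~\ref{lem:sign-change-at-ground-state}.
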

\begin{proof}
It suffices to show that for all \(C>0\) and all \(0<\varepsilon<1\) there exists some \(\hat\omega\geq C\), so that \(\det B_{\hat \omega}=0\) and \(\|B_{\hat\omega}-A_{\omega_0}\|_F\leq M \varepsilon\), where \(M>0\) is some constant that does not dependent on \(\varepsilon\) or \(C\).\\
We consider the function \(F:\mathbb R\rightarrow \mathbb R\) with \(F(\omega):=\det A_\omega\). By Lemma \ref{lem:sign-change-at-ground-state} and the analyticity of \(F\) we find some \(k\in\mathbb N_0\) so that \(F^{(j)}(\omega_0)=0\) for \(j\leq 2k\) and \(F^{(2k+1)}(\omega_0)\neq 0\). Next we consider the function \(G:[a,\infty)\rightarrow \mathbb R\) with \(G(\omega):=\det B_\omega\). 
By assumption, there exists a bounded and infinitely differentiable function \(R:[a,\infty)\rightarrow\mathbb R\) with bounded derivatives, so that
	\[G(\omega)=F(\omega)+\omega^{-1}R(\omega), \quad \omega\geq a.\]
In particular, all derivatives of \(G\) are bounded. Thus, following Lemma~\ref{lem:Taylor}, we may choose some \(\delta>0\), so that for \(\omega\geq a+\varepsilon^2\) the inequality
	\[
	\max 
		\left\{ 
		\lvert G(\omega) \rvert, 
		\lvert G'(\omega) \rvert, 
		\dots, 
		\lvert G^{(2 k)}(\omega) \rvert,
		\lvert G^{(2 k + 1)}(\omega) - F^{(2k+1)}(\omega_0) \rvert
		\right\}
		\leq
		\delta,
	\]
implies that \(G\) has a zero in \([\omega-\varepsilon^2,\omega+\varepsilon^2]\).
Now by boundedness of \(P_\omega\), \(R\), and its derivatives, we may assume that \(C>0\) is large enough so that the following inequalites hold for all $\omega \geq C$:
\begin{equation}\label{eq:estimate-perturbed-sec-function}
	|G^{(j)}(\omega)-F^{(j)}(\omega)|\leq \frac{\delta}{2}
	\quad
	\text{for all \(j=0,1,\ldots,2k+1\)},
\end{equation}
and 
\begin{equation}\label{eq:frob-estimate-perturbed-matrix}
	\|B_\omega-A_\omega\|_F\leq \varepsilon .
\end{equation}
Now, consider the function \(h:\mathbb R\rightarrow \mathbb R\) given by
	\[h(\omega):= \sum_{j=0}^{2k} F^{(j)}(\omega)^2+(F^{(2k+1)}(\omega)-F^{(2k+1)}(\omega_0))^2+\|A_\omega-A_0\|_F^2\]
for \(\omega\in\mathbb R\). This is a nonnegative trigonometric polynomial and thus almost periodic with \(h(0)=0\). Therefore, there exists some \(\omega_1\geq C+\varepsilon^2\) with
\begin{equation}\label{eq:estimate-almostperiod-applied-pert}
	h(\omega)\leq\max\left(\frac{\delta^2}{4},{\varepsilon^2}\right).
\end{equation}
Using \eqref{eq:estimate-perturbed-sec-function} and \eqref{eq:estimate-almostperiod-applied-pert} we obtain \(|G^{(2 k + 1)}(\omega_1) - F^{(2k+1)}(\omega_0)|\leq \delta\) and \(|G^{(j)}(\omega)|\leq \delta\) for \(j=0,1,\ldots,2k\). Thus, Lemma \ref{lem:Taylor} yields existence of some \(\hat\omega\in [\omega_1-\varepsilon^2,\omega_1+\varepsilon^2]\) with \(\det B_{\hat\omega}=G(\hat\omega)=0\). In particular, \(\hat\omega\geq C\) holds. Moreover, using \eqref{eq:frob-estimate-perturbed-matrix}, \eqref{eq:estimate-almostperiod-applied-pert} and \(|\omega_1-\hat\omega|\leq \varepsilon^2\) we obtain
	\begin{align*}
		\|B_{\hat\omega}-A_{\omega_0}\|_F^2 & \leq 3(\|B_{\hat\omega}-B_{\omega_1}\|_F^2+\|B_{\omega_1}-A_{\omega_1}\|_F^2+\|A_{\omega_1}-A_{\omega_0}\|_F^2)\\
		& \leq 3\left(\sup_{\omega\geq a}\Big|\frac{\partial}{\partial \omega}\|B_{\omega}\|_F^2\Big|+2\right)\varepsilon^2.
	\end{align*}
	Thus, the claimed statement follows by choosing \(M:=\sqrt{3\left(\sup_{\omega\geq a}|\frac{\partial}{\partial \omega}\|B_{\omega}\|_F^2|+2\right)}\).
	
\end{proof}
\subsection{Metric graphs with \(\delta\)-couplings}\label{sec:delta}

In this section, we indicate the modifications necessary to prove Theorem~\ref{thm:main_delta}. Using the notation in \eqref{eq:discretization}, the \(\delta\)-coupling condition \eqref{eq:delta_conditions} at $\mv \in \mV$ is equivalent to
	\[
	- \alpha_\mv \frac{x(\mv)}{\omega}
	+
	\sum_{\me \in \mE_\mv} x(\mv,\me)
	=
	0,
	\]
	leading to the perturbed matrix
	\[	
	B_\omega
	:=
	\begin{pmatrix}
	\begin{matrix}
		\dots & 0 & \dots \\
		0 & -\alpha_\mv \omega^{-1} & 0 \\
		 &  \dots &
	\end{matrix}
	&
	\rvline
	&
		\begin{matrix}
		&\dots & 0 & \dots & \\
		\dots & 1 & \dots & 1 & \dots \\
		&\dots & 0 & \dots & \\
		\end{matrix}
	\\
	\hline
		\begin{matrix}
		1 & - \cos( \ell_\me \omega)\\
		  & \sin( \ell_\me \omega)\\
		  & \dots\\
		\end{matrix}
	&
	\rvline
	&		
		\begin{matrix}
		\sin( \ell_\me \omega) & \\
		\cos( \ell_\me \omega) & 1 \\
		& \dots\\
		\end{matrix}
	\end{pmatrix}
	=
	A_\omega + \omega^{-1} P
\] 
	for \(\omega>0\) where $A_\omega$ corresponds to Laplacian where all $\delta$-couplings have been replaced by standard conditions and $P$ is an $\omega$-independent matrix. Thus, Lemma \ref{lem:pertubation} may be applied to the matrix \(B_\omega\). From here Theorem \ref{thm:main_delta} can be proved following the arguments of the proof of Theorem \ref{thm:main_tree}.

\subsection{Edgewise constant potential}
	\label{sec:potential}

We now indicate the modifications needed when an edgewise constant potential \(q=(q_\me)_{\me\in\mE}\in\mathbb R^{|\mE|}\) is added to the Laplacian, that is if one has the equation
\begin{equation}
	\label{eq:with_potential}
	- \psi'' + q_\me \psi = \lambda,
	\quad
	\text{on every edge }\me \in \mE.
\end{equation}
As we are primarily interested in the construction of a sequence of non-vanishing eigenfunctions, it is sufficient to consider large eigenvalues. More precisely, we consider positive eigenvalues $\lambda$ with \[\lambda>\max\{q_\me~|~\me\in\mE\}\]
with corresponding eigenfrequency $\omega = \sqrt \lambda$. Eigenfunctions will be a linear combination of $\sin$ and $\cos$ waves with \emph{local frequency} $\omega_\me = \sqrt{\lambda - q_\me} = \sqrt{\omega^2 - q_\me}$ on every edge $\me$. Thus, the only change to the construction of the secular function in Section~\ref{sec:secular_function} is that $\sin(\ell_\me \omega)$ and $\cos(\ell_\me \omega)$ need to be replaced by $\sin(\ell_\me \sqrt{\omega^2 - q_\me})$ and $\cos(\ell_\me \sqrt{\omega^2 - q_\me})$, respectively. We obtain the new matrix
	\[	
	B_\omega
	:=
	\begin{pmatrix}
	\begin{matrix}
		\dots & 0 & \dots \\
		0 & -\alpha_\mv \omega^{-1} & 0 \\
		 &  \dots &
	\end{matrix}
	&
	\rvline
	&
		\begin{matrix}
		&\dots & 0 & \dots & \\
		\dots & 1 & \dots & 1 & \dots \\
		&\dots & 0 & \dots & \\
		\end{matrix}
	\\
	\hline
		\begin{matrix}
		1 & - \cos( \ell_\me \sqrt{\omega^2-q_\me})\\
		  & \sin( \ell_\me \sqrt{\omega^2-q_\me})\\
		  & \dots\\
		\end{matrix}
	&
	\rvline

	&		
		\begin{matrix}
		\sin( \ell_\me \sqrt{\omega^2-q_\me}) & \\
		\cos( \ell_\me \sqrt{\omega^2-q_\me}) & 1 \\
		& \dots\\
		\end{matrix}
	\end{pmatrix}
\] 
operating on
\begin{equation}\label{eq:discretization-adjusted}
	x
	=
	\begin{pmatrix}
		\{ x(\mv) \}_{\mv \in \mV}
		\\
		\{ x(\mv,\me)\}_{\mv \in \mV, \me \in \mE_\mv}
	\end{pmatrix}
	=
	\begin{pmatrix}
		\{ \psi(v) \}_{\mv \in \mV}
		\\
		\big\{ \frac{\partial_\me \psi(\mv)}{\sqrt{\omega^2-q_\me}} \big\}_{\mv \in \mV, \me \in \mE_\mv}
	\end{pmatrix}.
\end{equation}

	\begin{lemma}\label{lem:estimate-cos-sin-pertubation}
	For all $j \in \{0,1,\dots\}$ there are $a_j, C_j  > 0$ such that for all $\omega \in [a_j, \infty)$, we have
	\begin{align*}
	\left\lvert \frac{\partial^j}{\partial \omega^j}
		\left( 
		\sin(\ell_\me \sqrt{\omega^2 - q_\me}) - \sin(\ell_\me \omega)
		\right) 
	\right\rvert
	&\leq
	\frac{C_j}{\omega}, \quad
	\text{and}\\
	\left\lvert \frac{\partial^j}{\partial \omega^j}
		\left(
		\cos(\ell_\me \sqrt{\omega^2 - q_\me}) - \cos(\ell_\me \omega) 
		\right)
	\right\rvert
	&\leq
	\frac{C_j}{\omega}.
	\end{align*}
\end{lemma}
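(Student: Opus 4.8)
The claim is a statement about how close the ``potential-deformed'' trigonometric functions are to the plain ones, uniformly in $\omega$ at the scale $1/\omega$, together with all their $\omega$-derivatives. The natural starting point is the substitution $\phi_\me(\omega) := \ell_\me\sqrt{\omega^2 - q_\me}$, which for $\omega$ large is a smooth function of $\omega$ satisfying
\[
\phi_\me(\omega) - \ell_\me\omega
=
\ell_\me\bigl(\sqrt{\omega^2 - q_\me} - \omega\bigr)
=
\frac{-\ell_\me q_\me}{\sqrt{\omega^2 - q_\me} + \omega}
=
O(\omega^{-1}),
\]
so the \emph{phase difference} $g_\me(\omega) := \phi_\me(\omega) - \ell_\me\omega$ is itself $O(\omega^{-1})$. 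The plan is to (i) show that $g_\me$ and each of its derivatives decay like $\omega^{-1}$ on a suitable half-line $[a_j,\infty)$, and (ii) feed this into the addition formulas for $\sin$ and $\cos$ via the mean value theorem to obtain the claimed bounds.

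\textbf{Step 1: decay of the phase difference and its derivatives.} For $\omega > \sqrt{|q_\me|}$ the function $\omega\mapsto\sqrt{\omega^2-q_\me}$ is analytic, and one checks inductively that $\frac{\partial^j}{\partial\omega^j}\sqrt{\omega^2 - q_\me}$ is a finite linear combination of terms of the form $\omega^{a}(\omega^2 - q_\me)^{-b+1/2}$ with $a - 2b + 1 \le 1$; equivalently, $\frac{\partial^j}{\partial\omega^j}\bigl(\sqrt{\omega^2-q_\me}-\omega\bigr)$ is, for $j\ge 1$, a combination of such terms with $a-2b+1\le -1$. Either from this explicit bookkeeping, or more slickly by writing $\sqrt{\omega^2-q_\me}-\omega = -q_\me/(\sqrt{\omega^2-q_\me}+\omega)$ and differentiating a quotient whose numerator is constant and whose denominator grows like $\omega$ with all derivatives bounded, one gets: there exist $\tilde a_j, \tilde C_j>0$ with
\[
\Bigl\lvert \tfrac{\partial^j}{\partial\omega^j}\, g_\me(\omega)\Bigr\rvert \le \tfrac{\tilde C_j}{\omega},
\qquad \omega \ge \tilde a_j,
\]
for every $j\ge 0$. (For $j=0$ this is the displayed identity above; for $j\ge 1$ the extra decay is automatic.) Taking the maximum over the finitely many edges $\me\in\mE$ absorbs the edge-dependence into the constants.

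\textbf{Step 2: passing to $\sin$ and $\cos$ via Fa\`a di Bruno / addition formulas.} Write $\sin(\phi_\me(\omega)) - \sin(\ell_\me\omega) = \sin(\ell_\me\omega + g_\me(\omega)) - \sin(\ell_\me\omega)$. The slick route is the integral form of the remainder:
\[
\sin(\ell_\me\omega + g_\me(\omega)) - \sin(\ell_\me\omega)
=
g_\me(\omega)\int_0^1 \cos\bigl(\ell_\me\omega + t\, g_\me(\omega)\bigr)\,\mathrm dt.
\]
To bound the $j$-th $\omega$-derivative of the left-hand side, apply the Leibniz rule to this product and, inside, Fa\`a di Bruno to the composed functions $\cos(\ell_\me\omega + t\,g_\me(\omega))$: every derivative of $\omega\mapsto \ell_\me\omega + t\,g_\me(\omega)$ is bounded on $[\tilde a_j,\infty)$ (the linear term contributes only $\ell_\me$ to the first derivative and $0$ thereafter, and the $g_\me$ part is bounded by Step~1), so each term in the expansion is a bounded quantity times at least one factor of $g_\me$ or one of its derivatives — hence $O(\omega^{-1})$ by Step~1. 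Integrating in $t$ preserves the bound. Choosing $a_j := \max\{\tilde a_0,\dots,\tilde a_j\}$ and $C_j$ the resulting constant gives the first inequality; the computation for $\cos$ is identical with the roles of $\sin$ and $\cos$ swapped. I would note that the reason we only need $\omega \ge a_j$ (rather than all $\omega$) is exactly to stay away from the branch point $\omega = \sqrt{q_\me}$ of the square root when $q_\me > 0$; on $[a_j,\infty)$ everything in sight is smooth with controlled growth.

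\textbf{Main obstacle.} There is no real obstacle — the statement is a routine but slightly fiddly ``smooth functions close at all orders'' estimate. The only point requiring a little care is organizing the derivative bookkeeping in Step~2 so that \emph{every} term in the Leibniz/Fa\`a di Bruno expansion manifestly carries a factor that decays like $\omega^{-1}$; the integral-remainder form above is what makes this transparent, since it factors out one explicit copy of $g_\me(\omega)$ before any differentiation, and all remaining factors are uniformly bounded on $[a_j,\infty)$. Once Step~1 is in hand, the rest is bounded-times-$O(\omega^{-1})$, and this lemma is then exactly what is needed to write $B_\omega = A_\omega + \omega^{-1}P_\omega$ with $\omega\mapsto P_\omega$ bounded with bounded derivatives, so that Proposition~\ref{lem:pertubation} applies and Theorem~\ref{thm:main_potential} follows as in the previous sections.
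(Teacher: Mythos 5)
Your argument is correct and follows essentially the same route as the paper: both reduce the estimate to the $O(\omega^{-1})$ decay of the phase difference $\ell_\me\bigl(\sqrt{\omega^2-q_\me}-\omega\bigr)$ and convert this into a bound on the difference of sines via the fundamental theorem of calculus. The paper only writes out the case $j=0$ and dismisses $j\geq 1$ as ``similar calculations,'' so your Leibniz/Fa\`a di Bruno bookkeeping in Step~2 is simply a careful execution of what the paper leaves implicit.
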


\begin{proof}
	We only show the first identity for $j = 0$.
	The proof of the identity with cosine terms is completely analogous and the cases of higher derivatives follow inductively from similar calculations.
	Setting $a_0 := \max \sqrt{ \lvert q_\me \rvert}$ and $C_0 := \max \frac{\lvert q_\me \rvert}{\ell_\me}$, we estimate for $\omega \geq a_0$
	\begin{align*}
	|\sin(\ell_\me \sqrt{\omega^2 - q_\me}) - \sin(\ell_\me \omega)|
	& =
	\frac{1}{\ell_\me}
	\left|\int_\omega^{\sqrt{\omega^2 - q_\me}}  \cos(\ell_\me x) \mathrm{d} x\right|
	\leq
	\frac{1}{\ell_\me}	
	\lvert \sqrt{\omega^2 - q_\me} - \omega \rvert
	\\
	&=
	\frac{|q_\me|}{\ell_\me (\sqrt{\omega^2 - q_\me} + \omega)}
	\leq |q_\me|(\ell_\me \omega)^{-1}
	\leq
	\frac{C_0}{\omega}
	.
	\qedhere
	\end{align*}
\end{proof}

This allows again to express the matrix $B_\omega$ corresponding to the operator with potential in a form
\[
	B_\omega = A_\omega + \omega^{-1} P_\omega
\]
for sufficiently large \(\omega\) where again $A_\omega$ is the matrix corresponding to the free Laplacian and with all \(\delta\)-couplings replaced by standard conditions. Lemma \ref{lem:estimate-cos-sin-pertubation} shows that the pertubation $P_\omega$ indeed satisfies the conditions of Lemma \ref{lem:pertubation} and we can repeat the arguments used in the proof of Theorem \ref{thm:main_tree} to prove Theorem \ref{thm:main_potential}.
\appendix
\section{A lemma on Taylor series}
In the proofs above we use the following simple consequence of Taylor's theorem:

\begin{lemma}
	\label{lem:Taylor}
	For some \(a>0\) let $\eta \colon [a,\infty) \to \RR$ be a continuous and $(2 k + 2)$ times differentiable function with uniformly bounded $(2 k + 2)$-nd derivative and let $\alpha \neq 0$.
	Then for every $\epsilon > 0$ there is $\delta > 0$ such that the following implication holds for all $x_0 \geq a+\varepsilon$: if we have
	\[
	\max 
		\left\{ 
		\lvert \eta(x_0) \rvert, 
		\lvert \eta'(x_0) \rvert, 
		\dots, 
		\lvert \eta^{(2 k)}(x_0) \rvert,
		\lvert \eta^{(2 k + 1)}(x_0) - \alpha \rvert
		\right\}
		\leq
		\delta,
	\]
	then $\eta$ has a zero in an $\varepsilon$-neighbourhood of $x_0$.
\end{lemma}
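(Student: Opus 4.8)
The plan is to apply Taylor's theorem with Lagrange remainder at the base point $x_0$ to order $2k+1$, and to observe that under the smallness hypothesis the function $\eta$ behaves near $x_0$ like a small multiple of $\alpha(x-x_0)^{2k+1}$, an odd power that changes sign at $x_0$; an application of the intermediate value theorem then produces the zero. Write $M$ for a (finite, positive) uniform bound on $\lvert\eta^{(2k+2)}\rvert$ on $[a,\infty)$, which exists by hypothesis and is where that hypothesis enters.

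First I would fix, \emph{independently of $x_0$}, a radius $\rho=\rho(\varepsilon,k,\alpha,M)\in(0,\varepsilon]$ small enough that the Lagrange remainder over an interval of radius $\rho$ is negligible against the leading term, concretely so that $\tfrac{M\rho^{2k+2}}{(2k+2)!}\le \tfrac{|\alpha|}{4}\cdot\tfrac{\rho^{2k+1}}{(2k+1)!}$. Since $\rho\le\varepsilon$ and $x_0\ge a+\varepsilon$, the interval $[x_0-\rho,x_0+\rho]$ lies in $[a,\infty)$, so $\eta$ and its Taylor expansion are available there. Writing $\eta^{(2k+1)}(x_0)=\alpha+r$, Taylor's formula at the two endpoints reads
\[
\eta(x_0\pm\rho)=\frac{\alpha(\pm\rho)^{2k+1}}{(2k+1)!}+\sum_{j=0}^{2k}\frac{\eta^{(j)}(x_0)}{j!}(\pm\rho)^{j}+\frac{r(\pm\rho)^{2k+1}}{(2k+1)!}+\frac{\eta^{(2k+2)}(\xi_\pm)}{(2k+2)!}(\pm\rho)^{2k+2}
\]
for suitable $\xi_\pm\in(x_0-\rho,x_0+\rho)$. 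Then I would pick $\delta=\delta(\varepsilon,k,\alpha,\rho)$ small enough that, when $\max\{\lvert\eta(x_0)\rvert,\dots,\lvert\eta^{(2k)}(x_0)\rvert,\lvert r\rvert\}\le\delta$, the middle sum and the $r$-term are each at most $\tfrac{|\alpha|}{4}\cdot\tfrac{\rho^{2k+1}}{(2k+1)!}$ in absolute value (using $\sum_{j=0}^{2k}\rho^{j}/j!\le e^{\rho}$ for the sum). Combined with the choice of $\rho$, the three error contributions total at most $\tfrac34\cdot\tfrac{|\alpha|\rho^{2k+1}}{(2k+1)!}$, strictly less than the modulus of the leading term; hence $\eta(x_0+\rho)$ has the sign of $\alpha$ and $\eta(x_0-\rho)$ the sign of $-\alpha$, and continuity of $\eta$ yields a zero in $(x_0-\rho,x_0+\rho)\subseteq(x_0-\varepsilon,x_0+\varepsilon)$.

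The only mildly delicate point is the order in which the two constants are fixed: the Lagrange remainder $\tfrac{M\rho^{2k+2}}{(2k+2)!}$ does not shrink as $\delta\to 0$, so one cannot simply take $\rho=\varepsilon$; the radius $\rho$ must first be shrunk (using the uniform derivative bound $M$) to control the remainder, and only afterwards is $\delta$ chosen relative to this $\rho$. Everything else is routine bookkeeping of the elementary Taylor estimate, requiring no further ideas.
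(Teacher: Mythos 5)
Your proposal is correct and follows essentially the same route as the paper's proof: Taylor expansion of order $2k+1$ at $x_0$ with Lagrange remainder, domination of the lower-order terms and the remainder by the odd leading term $\alpha(x-x_0)^{2k+1}/(2k+1)!$, and the intermediate value theorem. Your explicit two-step choice (first the radius $\rho$ against the uniform bound $M$, then $\delta$ against $\rho$) is just a cleaner bookkeeping of what the paper does by ``possibly shrinking $\varepsilon$''.
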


\begin{proof}
Without loss of generality let \(\alpha>0\). Taylor's Theorem yields for $x \in [x_0 - \varepsilon, x_0 + \varepsilon]$
	\[
	\eta(x)
	=
	\sum_{l = 0}^{2 k + 1}
	\frac{(x - x_0)^l}{l!} \eta^{(l)}(x_0)
	+
	\frac{(x - x_0)^{2k+2}}{(2 k + 2)!} \eta^{(2k+2)}(\xi)
	\]
	for some $\xi \in [x_0 - \varepsilon, x_0 + \varepsilon]$.
	Choosing $\delta \leq \varepsilon^{2k+2}$ this amounts to
	\[
	\eta(x)
	=
	\alpha (x - x_0)^{2 k + 1}
	+
	\zeta (x),
	\]
	where, assuming again without loss of generality $\varepsilon \leq 1$,
	\[
	\lvert \zeta (x) \rvert
	\leq
	\delta
	\left[
	1 + \varepsilon + \frac{\varepsilon^2}{2} + \dots + \frac{\varepsilon^{2k+1}}{(2k+1)!}
	\right]
	+
	\varepsilon^{2k+2}
	\lVert \eta^{(2k+2)} \rVert_\infty
	\leq
	(\mathrm{e} + \lVert \eta^{(2k+2)} \rVert_\infty)
	\varepsilon^{2k+2}.
	\]
	Possibly shrinking $\varepsilon$, we find
	\[
	\eta(x_0-\varepsilon)\leq -\alpha \varepsilon^{2k+1} + (\mathrm{e} + \lVert \eta^{(2k+2)} \rVert_\infty)
	\varepsilon^{2k+2} 
	< 0
	\]
	and
	\[
	\eta(x_0+\varepsilon) \geq \alpha \varepsilon^{2k+1} - (\mathrm{e} + \lVert \eta^{(2k+2)} \rVert_\infty)
	\varepsilon^{2k+2} 
	> 0
	\]
	where we used \(\alpha>0\). Thus, $\eta$ changes sign in $[x_0 - \varepsilon, x_0 + \varepsilon]$ and has a zero there.
\end{proof}

\end{document}